\DeclareMathAlphabet{\mathscrbf}{OMS}{mdugm}{b}{n}
\definecolor{violet}{rgb}{0.0,0.2,0.7}
\definecolor{rouge2}{rgb}{0.8,0.0,0.2}
\renewcommand\subsection{\@startsection{subsection}{2}%
  \z@{.5\linespacing\@plus.7\linespacing}{-.5em}%
  {\normalfont\sffamily}}  
\renewcommand{\phi}{\varphi}
\newcommand{\wh}{\widehat}
\newcommand{\wb}{\overline}
\renewcommand{\le}{\leqslant}
\renewcommand{\ge}{\geqslant}
\newcommand{\sD}{\mathscr{D}}
\newcommand{\sI}{\mathscr{I}}
\newcommand{\sL}{\mathscr{L}}
\newcommand{\sM}{\mathscr{M}}
\newcommand{\sO}{\mathscr{O}}
\newcommand{\sP}{\mathscr{P}}
\newcommand{\sQ}{\mathscr{Q}}
\newcommand{\sX}{\mathscr{X}}
\newcommand{\sY}{\mathscr{Y}}
\newtheorem{thm}{Theorem}[section]
\newtheorem{lemma}[thm]{Lemma}
\newtheorem{cor}[thm]{Corollary}
\newtheorem{prop}[thm]{Proposition}
\newtheorem*{thm*}{Theorem}
\theoremstyle{definition}
\newtheorem{defn-thm}[thm]{Definition-Theorem} 
\newtheorem{defn-lemma}[thm]{Definition-Lemma}
\newtheorem{not-and-defs}[thm]{Notation and definitions}
\newtheorem{not-and-def}[thm]{Notation and definition}
\theoremstyle{remark}
\newtheorem{fact}[thm]{Fact}
\newtheorem{rem}[thm]{Remark}
\numberwithin{equation}{section}
\def\factor#1.#2.{\left. \raise 2pt\hbox{$#1$} \right/\hskip -2pt\raise -2pt\hbox{$#2$}}
\begin{document} 

\title[$A$-analyticity of separatricies of foliations]{$A$-analyticity of separatricies of foliations}

\author{St\'ephane \textsc{Druel}}

\address{St\'ephane Druel: Univ Lyon, CNRS, Universit\'e Claude Bernard Lyon 1, UMR 5208, Institut Camille Jordan, F-69622 Villeurbanne, France.} 

\email{stephane.druel@math.cnrs.fr}


\subjclass[2010]{14G40, 37F75}

\begin{abstract}
Let $X$ be a smooth quasi-projective surface over a number field $K$, and let $L$ be a foliation on $X$. We prove that if $L$ is closed under $p$-th powers for almost all primes $p$, then any $L$-invariant smooth formal curve is $A$-analytic. Building on prior work of Bost we obtain an algebraicity criterion for those curves.  
\end{abstract}

\maketitle
{\small\tableofcontents}

\section{Introduction}

Let $X$ be a smooth quasi-projective surface over a number field $K$, and let $L$ be a foliation on $X$ (defined over $K$). 

If $L$ is closed under $p$-th powers for almost all primes $p$, then the generalization to foliations by Ekedahl, Shepherd-Barron and Taylor (see \cite[Conjecture F]{esbt}) of the classical Grothendieck-Katz conjecture predicts that $L$ has algebraic leaves. The conjecture has been shown in some special cases, e.g. if $X$ is a abelian surface and $L$ is induced by a non-zero vector field (\cite[Theorem 2.3]{bost}), if $X$ is the total space of a line bundle over an algebraic curve and $L$ is induced by a linear connection (\cite[Corollaire 4.3.6]{andre}), and if $X$ is a $\mathbb{P}^1$-bundle over an elliptic curve $E$ and $L$ is an Ehresmann connection on $X \to E$ (\cite[Proposition 9.3]{cd1fzerocan}).

Actually, \cite[Theorem 2.3]{bost} follows from an algebraicity criterion for smooth formal schemes in algebraic varieties over number fields (\cite[Theorem 3.4]{bost}), which we recall now. Let $P \in X(K)$, and let $\wh{V}$ be a smooth formal subscheme of the completion $\wh{X}_P$ of $X$ at $P$. If $\wh{V}$ is A-analytic (we refer to Section \ref{section:notation} for this notion) and $\wh{V}_\mathbb{C}$ satisfies the Liouville condition for some embedding $K \subset \mathbb{C}$, then the formal scheme $\wh{V}$ is algebraic. 

If $L$ is regular at $P \in X(K)$, then the formal leaf $\wh{V}$ of $L$ through $P$ is smooth but $\wh{V}$ is not A-analytic in general. Nevertheless, if $L$ is closed under $p$-th powers for almost all primes $p$, then $\wh{V}$ is $A$-analytic by \cite[Proposition 3.9]{bost}.

Suppose now that $P\in X(K)$ be a singular point of $L$, and let $K \subset \mathbb{C}$ be an embedding. In \cite{camacho_sad}, Camacho and Sad proved that there is at least one separatrix of 
$L_\mathbb{C}$ through $P_\mathbb{C} \in X_\mathbb{C}$, $\textit{i.e.}$ a (local) irreducible complex curve in $X_\mathbb{C}$ passing through $P_\mathbb{C}$ and tangent to $L_\mathbb{C}$.
Suppose in addition that $P_\mathbb{C}$ is a non-degenerate reduced singularity of $L_\mathbb{C}$. Then, by \cite[Appendice II]{mattei_moussu}, $L_\mathbb{C}$ has exactly two separatrices through $P_\mathbb{C}$. They are smooth and intersect transversely at $P_\mathbb{C}$.
In this paper, we extend \cite[Proposition 3.9]{bost} to this setting.

\begin{thm}\label{thm_intro}
Let $X$ be a smooth quasi-projective surface over a number field $K$, and let $L$ be a foliation on $X$. Suppose that $L$ is closed under $p$-th powers for almost all primes $p$.
Let $P\in X(K)$ be a singular point of $L$. Suppose furthermore that $P_\mathbb{C}$ is a non-degenerate reduced singularity of $L_\mathbb{C}$ for some embedding $K \subset \mathbb{C}$. Then the following holds.
\begin{enumerate}
\item There exist smooth formal subschemes $\wh{V}$ and $\wh{W}$ over $K$ of the completion $\wh{X}_P$ of $X$ at $P$ such that $\wh{V}_\mathbb{C}$ and $\wh{W}_\mathbb{C}$ are the formal completion at $P$ of 
the separatrices of $L_\mathbb{C}$ through $P_\mathbb{C}$ for any embedding $K \subset \mathbb{C}$.
\item  The formal curves $\wh{V}$ and $\wh{W}$ are $A$-analytic.
\end{enumerate}
\end{thm}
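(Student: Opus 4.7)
The two items will be handled separately. For item (1), the plan is to construct the formal separatrices by a Briot-Bouquet-type formal recursion, and then to descend them to $K$ using the $p$-closure hypothesis to force rationality at the level of eigenvalues. For item (2), the plan is to adapt to the singular-point setting the strategy of \cite[Proposition 3.9]{bost}, which treats formal leaves through smooth points.

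\emph{Construction of $\widehat V$ and $\widehat W$ over $K$.} Let $v$ be a local $K$-rational generator of $L$ near $P$, so $v(P)=0$. Denote by $\lambda_1,\lambda_2\in\bar K$ the eigenvalues of the linear part $M\in\mathrm{End}(T_P X)$ of $v$; the hypotheses give $\lambda_i\ne 0$ and $\mu:=\lambda_1/\lambda_2\notin\mathbb{Q}_{>0}$. The $p$-closure identity $v^{[p]}=f\cdot v$, read on linear parts at $P$, yields $\lambda_i^{p-1}\equiv f(P)\pmod{\mathfrak p}$ for almost all primes $\mathfrak p$ of $K$, hence $\mu^{p-1}\equiv 1\pmod{\mathfrak p}$; a Chebotarev density argument then forces $\mu\in\mathbb{Q}$, and combined with reducedness, $\mu\in\mathbb{Q}_{<0}$. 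From this one deduces that both eigendirections of $M$ in $T_P X$ are $K$-rational (after a suitable rescaling of $v$). The classical Briot-Bouquet recursion for a formal invariant curve tangent to an eigendirection, namely $(n\lambda_i-\lambda_{3-i})f_n = P_n(f_1,\dots,f_{n-1})$, then determines such a curve uniquely: the denominators $n\lambda_i-\lambda_{3-i}$ are non-zero for all $n\ge 2$ by reducedness, and the recursion is $K$-rational, producing the two formal curves $\widehat V$ and $\widehat W$ over $K$. After any complex embedding they agree with the Mattei-Moussu separatrices of $L_{\mathbb C}$ at $P_{\mathbb C}$.

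\emph{$A$-analyticity.} Fix an integral model $\mathcal X \to \Spec\,\mathcal O_K[1/N]$ extending $(X,L,P,\widehat V)$. For almost all primes $\mathfrak p$ lying over a rational prime $p$, the reduction $L_{\kappa(\mathfrak p)}$ on $X_{\kappa(\mathfrak p)}$ is closed under $p$-th powers, so by the Jacobson-Ekedahl-Shepherd-Barron-Taylor correspondence it is the relative tangent sheaf of a finite flat degree-$p$ factorisation
\[
X_{\kappa(\mathfrak p)} \xrightarrow{\pi_{\mathfrak p}} Y_{\mathfrak p} \lra X_{\kappa(\mathfrak p)}^{(p)}
\]
of the relative Frobenius. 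Any $L_{\kappa(\mathfrak p)}$-invariant formal curve through $P_{\kappa(\mathfrak p)}$ is contained in the preimage $\pi_{\mathfrak p}^{-1}(C_{\mathfrak p})$ of an algebraic curve $C_{\mathfrak p}\subset Y_{\mathfrak p}$ through $\pi_{\mathfrak p}(P_{\kappa(\mathfrak p)})$, hence in an algebraic curve on $X_{\kappa(\mathfrak p)}$ whose degree in a fixed projective embedding is bounded polynomially in $p$. Applied to $\widehat V_{\kappa(\mathfrak p)}$, this gives, via Bost's denominator-estimation machinery, the $\mathfrak p$-adic polynomial bounds on the Taylor coefficients of a $K$-rational parametrisation of $\widehat V$ that constitute the $A$-analyticity condition.

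\emph{Main obstacle.} The principal technical difficulty is the reduction-mod-$\mathfrak p$ analysis at the singular point $P_{\kappa(\mathfrak p)}$. The algebraic invariant curve $\pi_{\mathfrak p}^{-1}(C_{\mathfrak p})$ may be reducible, non-reduced or singular at $P_{\kappa(\mathfrak p)}$, and it must be shown that $\widehat V_{\kappa(\mathfrak p)}$ lies in a specific branch whose arithmetic complexity is uniformly bounded as $\mathfrak p$ varies. Establishing this inclusion uniformly over almost all $\mathfrak p$, and extracting the resulting uniform bounds, is the main new content beyond \cite[Proposition 3.9]{bost}; it relies on the local structure of a reduced non-degenerate singularity together with the rationality $\mu\in\mathbb{Q}_{<0}$ secured in the first item.
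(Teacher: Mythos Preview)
Your treatment of item (1) is essentially correct and close in spirit to the paper's: the paper invokes \cite[Proposition II.1.3]{mcquillan08} to conclude $\alpha_1/\alpha_2\in\mathbb{Q}$, while you sketch the Chebotarev argument directly; both then recover the formal separatrices over $K$ from the Briot--Bouquet recursion, whose denominators $n\lambda_i-\lambda_{3-i}$ are nonzero precisely because $-\mu\in\mathbb{Q}_{>0}$.

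For item (2), however, your strategy diverges from the paper's and has a genuine gap. The central assertion that any $L_{\kappa(\mathfrak p)}$-invariant formal curve through $P_{\kappa(\mathfrak p)}$ is contained in $\pi_{\mathfrak p}^{-1}(C_{\mathfrak p})$ for an \emph{algebraic} curve $C_{\mathfrak p}\subset Y_{\mathfrak p}$ is not justified: the image under $\pi_{\mathfrak p}$ of a formal curve is again only a formal curve in $Y_{\mathfrak p}$, and there is no evident mechanism forcing it to be algebraic of bounded degree. More fundamentally, the very formation of $\widehat V_{\kappa(\mathfrak p)}$ presupposes $\mathfrak p$-integrality of the Briot--Bouquet coefficients, which is essentially what $A$-analyticity asserts; the argument is circular as stated. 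Bost's Proposition 3.9 does not proceed via the Frobenius quotient but via controlled denominators in the formal flow $\exp(tD)$, and that mechanism is unavailable at a singular point since $D(P)=0$.

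The paper's route is entirely different and analytic rather than geometric. After blowing up once to arrange that one separatrix is the (algebraic) exceptional divisor, the $p$-closure hypothesis gives $\overline{\sD}_{\mathfrak p}^{\,p}=\overline{\sD}_{\mathfrak p}$ over $k_{\mathfrak p}$, and \cite[Lemma 6.4]{aramova_avramov} then produces formal coordinates in which $\overline{\sD}_{\mathfrak p}$ is \emph{linear}. Lifting these to $R_{\mathfrak p}$ puts $\wh{\sD}_{\mathfrak p}$ in the form $Y_1\partial_{Y_1}+(\lambda Y_2+g_{\mathfrak p})\partial_{Y_2}$ with $g_{\mathfrak p}\in\varpi_{\mathfrak p}R_{\mathfrak p}[[Y_1,Y_2]]$. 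The separatrix is then the solution of a nonlinear ODE $x\phi'+\alpha\phi=(\text{terms of norm }\le 1/p)$, and the main technical engine of the paper (Proposition \ref{prop:size_solution_ode}, a quantitative Newton iteration with careful tracking of Gauss norms through Lemmas \ref{lemma:change_of_variable} and \ref{lemma:ode_special}) yields $\log\big(1/S_{\sX_{R_{\mathfrak p}}}(\widehat V_{K_{\mathfrak p}})\big)\le C(\alpha)[K_{\mathfrak p}:\mathbb{Q}_p](\log p)^2/p^2$, which is summable. The $p$-closure hypothesis is thus used not to produce algebraic curves in characteristic $p$, but to make the $p$-adic perturbation from the linear model uniformly small.
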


In fact, a slightly more general statement is true (see Theorem \ref{thm:A-analyticity} and Corollary \ref{cor:cor}).

\medskip

The following is an immediate consequence of \cite[Theorem 3.4]{bost} and Theorem \ref{thm_intro} together.

\begin{cor}
Let $X$ be a smooth quasi-projective surface over a number field $K$, and let $L$ be a foliation on $X$. Suppose that $L$ is closed under $p$-th powers for almost all primes $p$.
Let $P\in X(K)$ be a singular point of $L$. Suppose that $P_\mathbb{C}$ is a non-degenerate reduced singularity of $L_\mathbb{C}$ for some embedding $K \subset \mathbb{C}$. 
Let $\wh{X}_P$ be the completion of $X$ at $P$, and let $\wh{V} \subset \wh{X}_P$ be a formal separatrix through $P$. Suppose in addition that there exists an embedding $K\subset\mathbb{C}$
such that $\wh{V}_\mathbb{C}$ satisfies the Liouville property. Then $\wh{V}$ is algebraic. 
\end{cor}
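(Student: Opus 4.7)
The plan is that this corollary is essentially a two-line consequence of what has just been established: Theorem \ref{thm_intro} promotes the given formal separatrix to an $A$-analytic formal curve defined over $K$, and Bost's algebraicity criterion \cite[Theorem 3.4]{bost}, together with the Liouville hypothesis, promotes $A$-analyticity to algebraicity. So the work consists in checking that the formal curve $\wh{V}$ appearing in the statement is genuinely one of the two formal subschemes produced by Theorem \ref{thm_intro}.

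First I would fix an embedding $K\subset\mathbb{C}$ with respect to which $P_\mathbb{C}$ is a non-degenerate reduced singularity of $L_\mathbb{C}$. By \cite[Appendice II]{mattei_moussu}, $L_\mathbb{C}$ admits exactly two separatrices through $P_\mathbb{C}$, and they are smooth and intersect transversely. Theorem \ref{thm_intro}(1) supplies two smooth formal subschemes $\wh{V}_0,\wh{W}_0\subset \wh{X}_P$ defined over $K$ whose base changes $\wh{V}_{0,\mathbb{C}}$ and $\wh{W}_{0,\mathbb{C}}$ are the formal completions of these two separatrices. Since $\wh{V}\subset\wh{X}_P$ is itself a formal separatrix, its base change $\wh{V}_\mathbb{C}$ is a smooth formal curve tangent to $L_\mathbb{C}$ at $P_\mathbb{C}$, and hence coincides with one of the two separatrices of $L_\mathbb{C}$ through $P_\mathbb{C}$. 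By faithfully flat descent of closed formal subschemes along $K\hookrightarrow\mathbb{C}$, we conclude $\wh{V}=\wh{V}_0$ or $\wh{V}=\wh{W}_0$.

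By Theorem \ref{thm_intro}(2), $\wh{V}$ is therefore $A$-analytic. Combined with the assumption that $\wh{V}_\mathbb{C}$ satisfies the Liouville property for some (possibly different) embedding $K\subset\mathbb{C}$, Bost's criterion \cite[Theorem 3.4]{bost} applies and yields that $\wh{V}$ is algebraic, as desired.

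Since the core analytic and arithmetic content is entirely packaged into Theorem \ref{thm_intro} and Bost's criterion, there is no real obstacle here; the only point that requires even a sentence of justification is the identification of the given $\wh{V}$ with one of the two formal separatrices delivered by Theorem \ref{thm_intro}, and this identification uses nothing more than the Mattei--Moussu classification and descent from $\mathbb{C}$ to $K$.
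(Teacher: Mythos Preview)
Your argument is correct and matches the paper's own treatment: the paper states that this corollary is an immediate consequence of \cite[Theorem 3.4]{bost} and Theorem \ref{thm_intro} together, without giving a separate proof. Your added sentence identifying $\wh{V}$ with one of the two formal curves produced by Theorem \ref{thm_intro} via Mattei--Moussu and descent is a reasonable elaboration of what the paper leaves implicit.
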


\subsection{Structure of the paper} Section 2 gathers notation, known results and global conventions that will be used throughout the paper. In section 3 we provide technical tools for the proof of the main results. More precisely, we study formal power series solutions of certain $p$-adic nonlinear differential equation using a Newton iteration procedure.
Section 4 is devoted to the proof of Theorem \ref{thm_intro}.

\section{Notation, conventions and used facts}\label{section:notation}

\subsection{Global convention} Throughout the paper a \textit{variety} is a reduced and irreducible scheme separated and of finite type over a field.

\subsection{Foliations}

Let $K$ be a number field, and let $R$ be its ring of integers. Let $X$ be a smooth quasi-projective variety over $K$. A \textit{foliation} on $X$ is a line bundle $L \subseteq T_X$ such that the quotient $T_X/L$ is torsion free. 

Let $U \subseteq X$ be the open subset where $L|_U$ is a subbundle of $T_U$. We say that $L$ is \textit{singular} at $P \in X$ if $P \in X \setminus U$.

Let $Y \subseteq X$ be a closed subvariety, and let $D$ be a derivation on $X$. 
Say that $Y$ is \textit{invariant under} $D$ if $D(\sI_Y)\subseteq \sI_Y$.

Say that $Y$ is \textit{invariant under} $L$ if for any local section $D$ of $L$ over some open subset $U$ of $X$, $D(\sI_{Y\cap U})\subseteq \sI_{Y\cap U}$.
To prove that $Y$ is invariant under $L$ it is enough to show that  $Y \cap U$ of $Y$ is invariant under $L|_U$ for some open set $U\subseteq X$ such that $Y\cap U$ is dense in $Y$.
If $X$ and $Y$ are smooth and $L \subseteq T_X$ is a subbundle, then $Y$ is invariant under $L$ if and only if $L|_Y \subseteq T_Y \subseteq T_X|_Y$.

\medskip

Let $N \ge 1$ be a sufficiently divisible integer, and let $\sX$ (resp. $\sL$) be a smooth model of $X$ over $R[1/N]$ (resp. a line bundle on $\sX$ contained in $T_{\sX/S}$ such that $\sL\otimes_R K$ coincides with $L$ and the quotient $T_{\sX/S}/\sL$ is torsion free), where $S:=\textup{Spec}\,R[1/N]$. Let $\mathfrak{p}$ be a maximal ideal of $R$ with $\mathfrak{p}\,\nmid\, N$, and let $k_\mathfrak{p}:=R/\mathfrak{p}$ denote the residue field at $\mathfrak{p}$. Let $p$ denote the characteristic of $k_\mathfrak{p}$. The sheaf of derivations $\textup{Der}_{k_\mathfrak{p}}(\sO_{\sX_\mathfrak{p}})\cong T_{\sX_\mathfrak{p}}$ is endowed with the $p$-th power operation, which maps any local $k_\mathfrak{p}$-derivation of $\sO_{\sX_\mathfrak{p}}$ to its $p$-th iterate. 

We say that $L$ \textit{is closed under $p$-th powers for almost all primes $p$} if there exists $N \mid N'$ such that, for any $\mathfrak{p}\,\nmid\, N'$, $\sL|_{\sX_\mathfrak{p}}\subseteq T_{\sX_\mathfrak{p}}$ is closed under $p$-th powers. This condition is independent of the choices of $\sX$ and $\sL$.

\subsection{$A$-analyticity of formal smooth schemes}\label{subsection:A-analytic curves}
We briefly recall a number of definitions and facts concerning $A$-analytic formal curves from \cite{bost} (see also \cite{bost_acl}). We refer to \textit{loc. cit.} for further
explanations concerning these notions. 

\medskip

Let $K$ be field equipped with some complete ultrametric absolute value $|\cdot|$ and assume that its valuation ring $R$ is a discrete valuation ring.

Let $N$ be a positive integer, and let $r$ be a positive real number $r$. For any formal power series $f=\sum_{I\in \mathbb{N}^N} a_I X^I \in \mathbb{C}_p[[X_1,\ldots,X_N]]$, we define $||f||_r$ by the formula
$$||f||_r=\sup_I\,|a_I|r^{|I|} \in \mathbb{R}_{\ge 0}\cup{+\infty}.$$
The power series $f$ such that $||f||_r<+\infty$ are those that are convergent and bounded on the open $N$-ball of radius $r$ in $K^{\,N}$. 

\begin{fact}
Let $r>0$ be a real number, and let $f \in \mathbb{C}_p[[X_1,\ldots,X_N]]$ and $g \in \mathbb{C}_p[[X_1,\ldots,X_N]]$ be formal power series with $||f||_r<+\infty$ and $||g||_r <+\infty$. Then 
$||fg||_r<+\infty$, and $||fg||_r \le ||f||_r||g||_r$. 
\end{fact}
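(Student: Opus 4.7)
The plan is to compute directly from the Cauchy product formula and the non-archimedean triangle inequality on $\mathbb{C}_p$. Writing $f=\sum_I a_I X^I$ and $g=\sum_J b_J X^J$ with multi-indices $I,J\in\mathbb{N}^N$, the product expands as $fg = \sum_K c_K X^K$ with $c_K = \sum_{I+J=K} a_I b_J$, where the sum defining each $c_K$ is finite. Since $|\cdot|$ is ultrametric, the triangle inequality gives $|c_K| \le \max_{I+J=K} |a_I|\,|b_J|$.

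For any decomposition $K=I+J$ one has $r^{|K|}=r^{|I|}\,r^{|J|}$, so multiplying the previous inequality by $r^{|K|}$ and invoking the definition of $||\cdot||_r$ yields
\[
|c_K|\,r^{|K|} \le \max_{I+J=K}\bigl(|a_I|\,r^{|I|}\bigr)\bigl(|b_J|\,r^{|J|}\bigr) \le ||f||_r\,||g||_r.
\]
Taking the supremum over $K\in\mathbb{N}^N$ produces $||fg||_r \le ||f||_r\,||g||_r$, which is the submultiplicativity asserted; since the right-hand side is finite by hypothesis, this simultaneously gives $||fg||_r<+\infty$.

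There is essentially no obstacle here: the statement is the standard submultiplicativity of the $r$-Gauss norm on $\mathbb{C}_p[[X_1,\ldots,X_N]]$, and its proof rests only on applying the ultrametric inequality coefficient by coefficient. The only minor point worth flagging is that each $c_K$ is a finite sum, so no $p$-adic convergence issue intervenes in evaluating the coefficients of $fg$; the finiteness of $||fg||_r$ then drops out for free from the displayed bound.
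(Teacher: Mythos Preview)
Your proof is correct; it is the standard verification of submultiplicativity for the Gauss norm via the ultrametric inequality applied to the Cauchy product. The paper does not supply a proof of this statement at all --- it is recorded as a \textit{Fact} without argument --- so there is nothing to compare against, and your write-up would serve perfectly well as the omitted justification.
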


\medskip

For any positive real number $r$, we denote by $G_{\textup{an},r} < \textup{Aut}\big({\wh{\mathbb{A}^N_K}}_0\big)$ the subgroup consisting of all $N$-tuples $f=(f_1,\ldots,f_N)$ such that 
$f(0)=0$, $Df(0)\in \textup{GL}_2(R)$, and $||f_i||_r \le r$ for each $i$.
This subgroup may be identified with the group of all analytic automorphisms, preserving
the origin, of the open $N$-dimensional ball of radius $r$. Set also $G_\textup{an}:=\cup_{r>0} G_{\textup{an},r}$.

\medskip

Notice that a smooth formal subscheme $\wh{V}$ of dimension $d$ of ${\wh{\mathbb{A}^N_K}}_0$ is $K$-analytic if and only if there exists $f \in G_\textup{an}$ such that $f^{-1}\wh{V}$ is the formal subscheme ${\wh{\mathbb{A}^d_K}}_0 \times \{0\}$ of ${\wh{\mathbb{A}^N_K}}_0$.

\medskip

Let $\sX$ be a quasi-projective $R$-scheme, and $X=:\sX\otimes_R K$ its generic fiber. Let $\sP\in \sX(R)$, and let $P:=\sP\otimes_R K$. Let $\wh{V}$ be a smooth formal subscheme of the completion $\wh{X}_P$ of $X$ at $P$.
There is a unique way to attach a number $S_\sX(\wh{V})\in [0,1]$ such that the following holds (see \cite{bost}). 
\begin{enumerate}
\item We have $S_\sX(\wh{V})>0$ if and only if $\wh{V}$ is $K$-analytic.
\item If $(\sX,\sP)=(\mathbb{A}^N_R,0)$ and $\wh{V}$ is $K$-analytic, then $S_\sX(\wh{V})$ is supremum of
the set of real numbers $0<r\le 1$ for which there exists $f \in G_\textup{an}$ such that $f^{-1}\wh{V}$ is the formal subscheme ${\wh{\mathbb{A}^d_K}}_0 \times \{0\}$ of ${\wh{\mathbb{A}^N_K}}_0$.
\item If $\sX \to \sX'$ is an immersion, then $S_\sX(\wh{V})=S_{\sX'}(\wh{V})$.
\item For any two triples $(\sX,\sP,\wh{V})$ and $(\sX',\sP',\wh{V}')$ as above, if there exists 
an $R$-morphism $f \colon \sX \to \sX'$ mapping $\sP$ to $\sP'$, \'etale along $\sP$, and inducing an isomorphism $\wh{V}\cong \wh{V}'$, then $S_\sX(\wh{V})=S_{\sX'}(\wh{V}')$.
\end{enumerate}
We will refer to $S_\sX(\wh{V})$ as the \textit{size of $\wh{V}$ with respect to the model $\sX$ of $X$.}

\medskip

We will need the following easy observations.

\begin{lemma}[\cite{bost_acl}]\label{lemma:size_graph_1}
Let $\phi=\sum_{m\ge 1}c_m X^m \in K[[X]]$ be formal power series such that $c_1 \in R$, and let $\wh{V}$ be its graph in ${\wh{\mathbb{A}^2_K}}_0$. Set $\lambda:=\inf_{m\ge 1}-\frac{\log |c_{m+1}|}{m}\in \mathbb{R}\cup\{-\infty\}$, and let $\rho$ be the radius of convergence of $\phi$. Then the following holds.
\begin{enumerate}
\item The size $S_{\mathbb{A}^2_R}(\wh{V})$ of $\wh{V}$ with respect to $(\mathbb{A}^2_R,0)$ satisfies $S_{\mathbb{A}^2_R}(\wh{V}) \le \rho$.
\item If $\rho>0$, then $S_{\mathbb{A}^2_R}(\wh{V}) \ge \min(1,\exp \lambda)$. In particular, $\wh{V}$ is $K$-analytic. Moreover, if $\phi\in R[[X]]$, then $S_{\mathbb{A}^2_R}(\wh{V})=1$.
\item If $\rho>0$ and $\phi'(0)$ is a unit in $R$, then 
$S_{\mathbb{A}^2_R}(\wh{V}) = \min(1,\exp \lambda)$.
\end{enumerate}
\end{lemma}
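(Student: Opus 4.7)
The strategy is to exhibit the size of the graph $\wh V$ as the common value of the lower bound produced by an explicit straightening automorphism and the upper bound extracted from inspecting any automorphism that realizes the size. For (2), I would use $f(X,Y) := (X,\, Y + \phi(X))$, whose inverse sends $\wh V$ to the first coordinate axis. One readily checks $f(0)=0$ and $Df(0) = \bigl(\begin{smallmatrix} 1 & 0 \\ c_1 & 1\end{smallmatrix}\bigr) \in \textup{GL}_2(R)$, the latter because $c_1 \in R$. The only nontrivial condition for $f \in G_{\textup{an},r}$ is $||f_2||_r \le r$, which unpacks to $|c_m|r^{m-1} \le 1$ for every $m \ge 1$: for $m=1$ this is just $c_1 \in R$, and for $m \ge 2$ it rewrites as $-\log|c_m|/(m-1) \ge \log r$, i.e.\ $\log r \le \lambda$. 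Hence $f \in G_{\textup{an},r}$ for every $r \le \min(1,\exp\lambda)$, which gives the lower bound. The final assertion of (2) then follows because $\phi \in R[[X]]$ forces $\lambda \ge 0$.

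For (1) and (3), the plan is to reverse the process. Fix $r < S_{\mathbb{A}^2_R}(\wh V)$ and pick $f \in G_{\textup{an},r}$ with $f^{-1}(\wh V)$ equal to the first coordinate axis; set $\alpha(X) := f_1(X,0)$ and $\beta(X) := f_2(X,0)$. Both are convergent on the ball of radius $r$ with norm $\le r$, and $f(\text{axis}) \subset \wh V$ yields the formal identity $\beta(X) = \phi(\alpha(X))$. Differentiating at $0$ shows the first column of $Df(0)$ is $(\alpha'(0),\, c_1\alpha'(0))$, so $\det Df(0) = \alpha'(0)(t - c_1 s)$ with $(s,t)$ the second column; the condition $\det Df(0) \in R^\times$ then forces $\alpha'(0) \in R^\times$. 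Invoking the non-Archimedean inverse function theorem produces an analytic inverse $\alpha^{-1}$ on the same ball with $||\alpha^{-1}||_r \le r$. The composition $\phi = \beta \circ \alpha^{-1}$ is therefore convergent on that ball, giving $\rho \ge r$ (hence (1)), while the submultiplicativity of $||\cdot||_r$ yields $||\phi||_r \le ||\beta||_r \le r$. The latter inequality translates to $|c_m|r^{m-1} \le 1$ for all $m$, hence $r \le \min(1,\exp\lambda)$ by the computation in (2). Letting $r \nearrow S_{\mathbb{A}^2_R}(\wh V)$ and combining with (2) proves (3).

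The main obstacle is the non-Archimedean inverse function step: given $\alpha'(0) \in R^\times$ and $||\alpha||_r \le r$, one must produce an analytic inverse $\alpha^{-1}$ of $\alpha$ satisfying $||\alpha^{-1}||_r \le r$. This follows by a routine Picard iteration in the Banach algebra of power series of norm $\le r$, relying on the submultiplicativity $||fg||_r \le ||f||_r\,||g||_r$ of the Gauss norm recalled above.
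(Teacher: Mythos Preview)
Your argument is correct and follows essentially the same strategy as the paper. For (2), both you and the paper produce an explicit element of $G_{\textup{an},r}$ straightening $\wh V$; your choice $f(X,Y)=(X,\,Y+\phi(X))$ differs from the paper's $f(X_1,X_2)=(X_1+X_2,\,\phi(X_1))$, and is in fact slightly cleaner: your $Df(0)$ has determinant $1$, so lies in $\textup{GL}_2(R)$ as soon as $c_1\in R$, whereas the paper's has determinant $-c_1$ and thus tacitly needs $c_1\in R^\times$. For (1) and (3) the paper gives no argument, simply citing \cite[Proposition~3.5]{bost_acl}; your direct proof---extract $\alpha,\beta$ from an arbitrary straightening $f$, show $\alpha'(0)\in R^\times$ from $\det Df(0)\in R^\times$, invert $\alpha$ via the non-Archimedean inverse function theorem, and deduce $\|\phi\|_r\le r$---is exactly the standard argument behind that reference. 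It is worth noting that your upper-bound argument for (3) never invokes the hypothesis $\phi'(0)\in R^\times$; combined with your version of (2), this shows the equality $S_{\mathbb{A}^2_R}(\wh V)=\min(1,\exp\lambda)$ already under $c_1\in R$ and $\rho>0$.
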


\begin{proof}
Items (1) and (3) are shown in \cite[Proposition 3.5]{bost_acl}. To prove Item (2), let $$f(X_1,X_2):=(X_1+X_2,\phi(X_1))\in \textup{Aut}\big({\wh{\mathbb{A}^2_K}}_0\big).$$ Then $f^{-1}\wh{V}$ is the formal subscheme ${\wh{\mathbb{A}^1_K}}_0 \times \{0\}$ of ${\wh{\mathbb{A}^2_K}}_0$. If $\rho >0$, then $\lambda \in \mathbb{R}$. Set $r:=\min(1,\exp \lambda)\in [0,1]$. Then $||f||_r \le r$, and hence $f \in G_\textup{an}$. This finishes the proof the lemma.
\end{proof}

\begin{lemma}\label{lemma:size_graph_2}
Let $\phi=\sum_{m\ge 2}c_m X^m \in K[[X]]$ be formal power series such that  $c_2\in R$, and set $\phi:=\phi(X)/X=\sum_{m\ge 1}c_{m+1} X^m$. 
Let $\wh{V}$ $($resp. $\wh{W})$ be the graph of $\phi$ $($resp. $\psi)$ in ${\wh{\mathbb{A}^2_K}}_0$. Then $S_{\mathbb{A}^2_R}(\wh{V}) \ge S_{\mathbb{A}^2_R}(\wh{W})$.
\end{lemma}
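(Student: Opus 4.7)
My plan is to characterize both sizes via item~(2) of Section \ref{subsection:A-analytic curves} and to build a straightener of $\wh{V}$ from one of $\wh{W}$ by an explicit modification. Fix $r \in \bigl(0,\, S_{\mathbb{A}^2_R}(\wh{W})\bigr)$ (the case $S_{\mathbb{A}^2_R}(\wh{W})=0$ being vacuous) and pick $f=(f_1,f_2)\in G_{\textup{an},r}$ satisfying $f^{-1}\wh{W}=\wh{\mathbb{A}^1_K}_0\times\{0\}$; equivalently $f_2(X_1,0)=\psi(f_1(X_1,0))$. Because $\phi(X)=X\psi(X)$, the natural candidate straightener for $\wh{V}$ is
\[
g := \bigl(f_1,\; X_2 + f_1 f_2\bigr),
\]
since on the axis $\{X_2=0\}$ it returns $\bigl(f_1(X_1,0),\; f_1(X_1,0)\psi(f_1(X_1,0))\bigr)=\bigl(f_1(X_1,0),\, \phi(f_1(X_1,0))\bigr)\in\wh{V}$. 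Proving $g\in G_{\textup{an},r}$ and $g^{-1}\wh{V}=\wh{\mathbb{A}^1_K}_0\times\{0\}$ will yield $S_{\mathbb{A}^2_R}(\wh{V})\ge r$, and letting $r\uparrow S_{\mathbb{A}^2_R}(\wh{W})$ concludes.

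The size bound $\|g_2\|_r \le \max(\|X_2\|_r,\, \|f_1\|_r \|f_2\|_r) \le \max(r,r^2) = r$ is immediate from the ultrametric inequality, the submultiplicativity of $\|\cdot\|_r$, and $r\le 1$. The remaining requirements---that $Dg(0)\in\textup{GL}_2(R)$, and that $g$ surjects onto (not merely into) $\wh{V}$---both boil down to the single assertion that $a:=(\partial f_1/\partial X_1)(0)$ is a \emph{unit} in $R$. Indeed, once $a\in R^*$, $Dg(0)=\bigl(\begin{smallmatrix}a&b\\0&1\end{smallmatrix}\bigr)$ is upper triangular with unit determinant, and the substitution $X_1\mapsto f_1(X_1,0)=aX_1+\cdots$ is a formal automorphism of $K[[X_1]]$, so the parametrization $X_1\mapsto g(X_1,0)$ actually sweeps out all of $\wh{V}$.

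The one non-formal step is therefore showing $a\in R^*$; I expect this to be the main obstacle. The argument exploits that the first column of $Df(0)$ is the image of the tangent direction of $\wh{\mathbb{A}^1_K}_0\times\{0\}$ under $Df(0)$, which must coincide with a tangent vector of $\wh{W}$ at the origin. Since $\wh{W}$ is the graph of $\psi$ with $\psi'(0)=c_2$, this image is a nonzero $K$-multiple $\mu\cdot(1,c_2)$, so $a=\mu$ and $(\partial f_2/\partial X_1)(0)=\mu c_2$. Then $\det Df(0) = \mu\bigl(d - bc_2\bigr)$ with $b:=(\partial f_1/\partial X_2)(0)$ and $d:=(\partial f_2/\partial X_2)(0)$. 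Because $b,d\in R$ (from $Df(0)\in\textup{GL}_2(R)$) and $c_2\in R$ by hypothesis, the factor $d-bc_2$ lies in $R$; the relation $\mu(d-bc_2)\in R^*$ thus forces $\mu\in R^*$, giving $a\in R^*$. This is precisely where the integrality hypothesis $c_2\in R$ enters, and once it is in place the rest of the verification is routine.
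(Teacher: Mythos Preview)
Your proposal is correct and follows essentially the same approach as the paper: the paper constructs the very same $g=(f_1,\,X_2+f_1f_2)$, checks $\|g_i\|_r\le r$ exactly as you do, and deduces $g^{-1}\wh V=\wh{\mathbb{A}^1_K}_0\times\{0\}$ from the identity $f_1(T,0)f_2(T,0)=\phi(f_1(T,0))$. The paper's justification that $a_1\in R^*$ is terser (it simply asserts this follows from $Df(0)\in\textup{GL}_2(R)$ together with $b_1=c_2a_1$), whereas you spell out the determinant factorization $\det Df(0)=a(d-bc_2)$ explicitly; but the underlying argument is the same, and your added remark about surjectivity of the parametrization is a welcome clarification the paper leaves implicit.
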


\begin{proof}
Let be a positive real number such that $r<S_{\mathbb{A}^2_R}(\wh{W})$. By assumption, there exist formal power series $f_1$ and $f_2$ in $K[[X_1,X_2]]$ such that $f=(f_1,f_2) \in G_{\textup{an},r}$ and 
$f^{-1}\wh{W}=\wh{A^1_K}\times\{0\}$. This last condition is actually equivalent to the identity
$$f_2(T,0)=\psi(f_1(T,0))$$
in $K[[T]]$.
Let us write $f_1(T,0)=\sum_{m\ge 1}a_m T^m$, and $f_2(T,0)=\sum_{m\ge 1}b_m T^m$.
We have $b_1=c_2a_1$. Since $f \in G_{\textup{an},r}$, we must have $Df(0)\in \textup{GL}_2(R)$.  
This immediately implies that $a_1$ is a unit in $R$. 

Let us set 
$$g_1(X_1,X_2)=f_1(X_1,X_2),\quad g_2(X_1,X_2)=Y+f_1(X_1,X_2)f_2(X_1,X_2),\quad \textup{and} \quad g=(g_1,g_2).$$ 
Then $||g_1||_r=||f_1||_r \le r$ and $||g_2||_r \le r$ since  
$||f_1f_2||_r \le ||f_1||_r||f_2||_r \le r^2\le r$. Moreover, 
$$
Dg(0)=\begin{pmatrix}
a_1 & \partial_{X_2} f (0) \\
0 & 1 
\end{pmatrix} 
\in \textup{GL}_2(R),
$$
and hence $g \in G_{\textup{an},r}$.
Finally, notice that $g^{-1}\wh{V}=\wh{A^1_K}\times\{0\}$ since $f_2(T,0)f_1(T,0)=\phi(f_1(T,0))$ in $K[[T]]$. This shows that $S_{\mathbb{A}^2_R}(\wh{V}) \ge S_{\mathbb{A}^2_R}(\wh{W})$, finishing the proof of the lemma.
\end{proof}

\begin{rem}
In the setup of Lemma \ref{lemma:size_graph_2}, note that the (formal) curve $T \mapsto (T,\psi(T))$ is the proper transform of the (formal) curve $T \mapsto (T,\phi(T))$ in the blow-up of $\mathbb{A}^1_K$ at $0$.
\end{rem}

Let $K$ be a number field, and let $R$ be its ring of integers. Let $\mathfrak{p}$ be a maximal ideal of $R$, let $|\cdot|_\mathfrak{p}$ be the $\mathfrak{p}$-adic absolute value, normalized by the condition $|\varpi_\mathfrak{p}|_\mathfrak{p}=\frac{1}{\sharp (R/\mathfrak{p})}$ for any 
uniformizing element $\varpi_\mathfrak{p}$ at $\mathfrak{p}$. Let 
$K_\mathfrak{p}$ and $R_\mathfrak{p}$ be the $\mathfrak{p}$-adic completions of $K$ and $R$, and 
$k_\mathfrak{p}:=R_\mathfrak{p}/(\varpi_\mathfrak{p})$ the residue field at $\mathfrak{p}$. Let $p$ denote the characteristic of $k_\mathfrak{p}$. Let $X$ be a quasi-projective algebraic variety over $K$, and let $P \in X(K)$. Let $\wh{V}$ be a smooth formal subscheme (defined over $K$) of the formal completion $\wh{X}_P$ of $X$ at $P$.
Let $N \ge 1$ be a sufficiently divisible integer, and let $\sX$ be a quasi-projective model of $X$ over $R[1/N]$, such that $P$ extends to a point $\sP \in \sX(R[1/N])$. The smooth formal scheme $\wh{V}$ is said to be \textit{$A$-analytic} (see \cite[Definition 3.7]{bost_acl}) if 
\begin{enumerate}
\item for any place $v$ of $K$, the formal scheme $\wh{V}_{K_v}$ is $K_v$-analytic, where $K_v$ denotes the completion of $K$ with respect to $v$, and
\item we have $$\sum_{\mathfrak{p}\,\nmid\, N} \log \frac{1}{S_{\sX_{R_\mathfrak{p}}}(\wh{V}_{K_\mathfrak{p}})}<+\infty.$$
\end{enumerate}
This condition is independent of the choices of $\sX$, $\sP$, and $N$.

\section{Power series solutions of certain $p$-adic differential equations}

Let $p$ be a prime number. Let $\mathbb{Q}_p$ (resp. $\mathbb{C}_p$) be the field of $p$-adic rational numbers (resp. complex numbers). We denote by $|\cdot|$ the ultrametric absolute value on $\mathbb{C}_p$ normalized by $|p|=p^{-1}$. We denote by $v$ the $p$-adic valuation on $\mathbb{C}_p$ normalized by $v(p)=1$.

\medskip

In this section, we study formal power series solutions of certain $p$-adic nonlinear differential equations at a regular singular point. The following is the main result of this section.

\begin{prop}\label{prop:size_solution_ode}
Let $p$ be an odd prime integer, let $1 \le s \le p-1$ and $1 \le t \le p-1$ be relatively prime integers, and set $\alpha:=\frac{s}{t}$.
Let $a$, $b$, and $c_m$ for any integer $m\ge 2$ be power series in $\mathbb{C}_p[[X]]$
such that $||a||_r \le \frac{r}{p}$, $||b||_r \le \frac{r}{p}$, and $||c_m||_r \le \frac{1}{p}$ for some real number $r \in ]0,1]$. Suppose in addition that $a(0)=a'(0)=0$ and that $b(0)=0$.
Then the following holds.
\begin{enumerate}
\item There exists a unique formal power series $y$ in $\mathbb{C}_p[[X]]$ with $y(0)=y'(0)=0$ solution of the differential equation 
\begin{equation}\label{eq:edo_-1}
xy'+\alpha y = a+by+\sum_{m\ge 2}c_my^m.
\end{equation}
\item There exists a constant $C>0$ such that, letting 
$R:=r \exp\Big(- Ct\frac{(\log p)^2}{p^2}\Big)$, we have $||y||_{R} \le R$.
\end{enumerate}
\end{prop}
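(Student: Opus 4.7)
Writing $y = \sum_{m \ge 2} y_m X^m$ (so that $y(0) = y'(0) = 0$ is automatic) and identifying the coefficient of $X^m$ on both sides of \eqref{eq:edo_-1} yields, for every $m \ge 2$, a recursion
\begin{equation*}
(m + \alpha)\, y_m = P_m,
\end{equation*}
where $P_m$ is a polynomial in the coefficients of $a$, $b$, $c_n$ and in $y_2, \ldots, y_{m-1}$. Since $m + \alpha = (mt + s)/t > 0$ for every $m \ge 2$, this uniquely determines $y_m$, and induction yields existence and uniqueness.

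\textbf{(2) Newton iteration.} For the size bound I would, as suggested in the introduction, set up a Newton iteration. Put $L := X \partial_X + \alpha$ and $F(y) := L(y) - a - by - \sum_{n \ge 2} c_n y^n$, so that the equation reads $F(y) = 0$. Starting from $y_{(0)} := 0$, define inductively $y_{(k+1)} := y_{(k)} + h_{(k)}$, where $h_{(k)}$ solves the linearized equation
\begin{equation*}
F'(y_{(k)})\, h_{(k)} = -F(y_{(k)}), \qquad F'(y) := L - b - \textstyle\sum_{n \ge 2} n c_n y^{n-1}.
\end{equation*}
A Taylor expansion gives the quadratic estimate $F(y_{(k+1)}) = O\bigl(F(y_{(k)})^2\bigr)$, so $y_{(k)}$ agrees with the true solution $y$ up to an order that roughly doubles at each step, and hence $y_{(k)} \to y$ in the $X$-adic topology. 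The size bound then reduces to proving $\|y_{(k)}\|_{R_k} \le R_k$ on a carefully chosen sequence of radii $R_k \searrow R$.

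\textbf{(3) Controlling each Newton step.} The operator $L^{-1}$ acts by $X^m \mapsto \frac{t}{mt + s} X^m$, and the resulting $p$-adic loss at frequency $m$ is $p^{v(mt+s)}$. The key combinatorial input is the elementary bound
\begin{equation*}
\#\{2 \le m \le N : v(mt+s) \ge j\} \le \lfloor N/p^j \rfloor + 1,
\end{equation*}
which quantifies the density of small denominators of each weight $j \ge 1$. The smallness hypotheses $\|a\|_r \le r/p$, $\|b\|_r \le r/p$, $\|c_n\|_r \le 1/p$, together with $b(0) = 0$ and $a(0) = a'(0) = 0$ (so that $a, by_{(k)}, c_n y_{(k)}^n$ all lie in the image of $L$), guarantee that $F'(y_{(k)})^{-1} = L^{-1} \bigl(1 - L^{-1} \cdot \textup{pert}\bigr)^{-1}$ converges as a Neumann series. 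Combined with the submultiplicativity $\|fg\|_r \le \|f\|_r \|g\|_r$ (the Fact in Section \ref{subsection:A-analytic curves}), this gives quantitative control on $\|h_{(k)}\|_{R_k}$, and hence on $\|y_{(k+1)}\|_{R_{k+1}}$.

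\textbf{(4) Main obstacle.} The principal technical difficulty is to telescope the per-step losses so that the cumulative shrinking of radius is bounded by $\exp(-Ct (\log p)^2 / p^2)$. Heuristically, the first $m$ at which $v(mt+s) \ge j$ lies near $p^j/t$, and absorbing the resulting factor $p^j$ into $\|y\|_R \le R$ forces $\log(r/R) \gtrsim j t \log p / p^j$ at weight $j$. Summing over $j \ge 2$ contributes a term of order $t \log p / p^2$, and the interplay between this arithmetic progression of losses and the doubling-of-orders structure of Newton's scheme contributes a further factor of order $\log p / p$, yielding the claimed $(\log p)^2 / p^2$ rate, the $t$-dependence coming from the normalization of the denominators $mt+s$. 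Carrying out this bookkeeping rigorously—choosing the $R_k$ explicitly, verifying convergence of each Neumann series on its disc, and tracking the implicit constants so as not to spoil the exponent—is the main hurdle.
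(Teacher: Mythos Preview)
Your overall architecture—Newton iteration with quadratically doubling contact order, following Sibuya--Sperber—is exactly what the paper does, and part (1) matches the paper's argument.

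The divergence is in how each linearized step is solved. You propose to invert $F'(y_{(k)}) = L - \mathrm{pert}$ via the Neumann series $L^{-1}\sum_{n\ge 0} (L^{-1}\,\mathrm{pert})^n$. The difficulty, which you gesture at in (4) but do not resolve, is that $L^{-1}$ is unbounded on every $\|\cdot\|_\rho$: on $X^m$ it multiplies by $t/(mt+s)$, of absolute value $p^{v(mt+s)}$, which is not uniformly controlled. Each factor $L^{-1}$ in the series therefore incurs an amplification, and nothing in your sketch establishes that the series converges on a fixed disk; your density count in (3) controls a single application of $L^{-1}$ to a power series of known support, not an infinite composition interleaved with multiplications.

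The paper sidesteps this entirely. First it eliminates $b$ once and for all by the substitution $z = y\exp(-B)$ with $B(x) = \int_0^x b(u)u^{-1}\,du$, reducing to $b = 0$ at a controlled radius cost (Lemma~\ref{lemma:change_of_variable}). Then in the Newton scheme, step $k$ produces a linear equation $x z_k' + \alpha z_k = a_k + b_k z_k$ with $a_k, b_k = O(x^{2^k})$; the paper again kills $b_k$ by the integrating factor $w_k = z_k\exp(-B_k)$ and is left with $x w_k' + \alpha w_k = (\text{known})$, solved by a \emph{single} application of $L^{-1}$ (Lemma~\ref{lemma:ode_special}). Both lemmas come in two regimes: a crude estimate for small $k$, losing $O\bigl(t(p-1)^{-2}\log p\bigr)$ per step, and an estimate exploiting the vanishing order $2^k$ for large $k$, losing $O(kt\,2^{-k})$. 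The switchover occurs at $k_1 \asymp \log p$; the roughly $k_1$ early steps contribute one factor of $\log p$, the per-step loss the other, and summing gives the claimed $t(\log p)^2/p^2$.

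So the ingredient you are missing is the integrating-factor trick, which replaces the Neumann series by a single explicit inversion at each step and makes the telescoping in your (4) go through without the unresolved convergence issue.
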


\medskip

We will need the following well-known facts.

\begin{fact}
Let $f=\sum_{m\ge 0}a_mX^m\in\mathbb{C}_p[[X]]$ and $g=\sum_{m\ge 0}b_mX^m\in\mathbb{C}_p[[X]]$. Suppose that there exists a positive real number $r$ such that both $|a_m|r^m$ and $|b_m|r^m$ goes to $0$ as $m$ goes to $+\infty$. Then $||fg||_r=||f||_r||g||_r$ (see \cite[Proposition 2 of Section 6.1.4]{robert}).
\end{fact}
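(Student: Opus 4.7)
The plan is the standard proof of multiplicativity of the Gauss norm on $\mathbb{C}_p[[X]]$, specialised to the case where the hypothesis forces both norms to be finite and actually attained. The submultiplicativity $||fg||_r \le ||f||_r\,||g||_r$ is recorded as a previous Fact in Section \ref{subsection:A-analytic curves} (note that $|a_m|r^m\to 0$ and $|b_m|r^m\to 0$ imply $||f||_r,||g||_r<+\infty$), so all the substance lies in exhibiting a single coefficient of $fg$ whose contribution equals $||f||_r\,||g||_r$.

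To do so I would exploit the hypothesis that $|a_m|r^m\to 0$ and $|b_m|r^m\to 0$: for every $\epsilon>0$ only finitely many indices $m$ satisfy $|a_m|r^m\ge\epsilon$, so the suprema defining $||f||_r$ and $||g||_r$ are in fact maxima, attained on nonempty finite sets of indices. Assume $f,g\ne 0$ (otherwise equality is trivial). Let $I$ be the \emph{smallest} index with $|a_I|r^I=||f||_r$, and define $J$ analogously for $g$. I would then write $fg=\sum_{n\ge 0}c_nX^n$ with $c_n=\sum_{i+j=n}a_ib_j$ and isolate
$$
c_{I+J}=a_Ib_J+\sum_{\substack{i+j=I+J\\ (i,j)\ne (I,J)}}a_ib_j.
$$

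The remaining step is to argue that $a_Ib_J$ strictly dominates every other summand. For any $(i,j)\ne(I,J)$ with $i+j=I+J$, either $i<I$, in which case the minimality of $I$ forces $|a_i|r^i<|a_I|r^I$ (while $|b_j|r^j\le||g||_r$), or else $i>I$ and hence $j<J$, in which case the minimality of $J$ forces $|b_j|r^j<|b_J|r^J$ (while $|a_i|r^i\le||f||_r$). In both cases $|a_ib_j|r^{i+j}<||f||_r\,||g||_r=|a_Ib_J|r^{I+J}$. The strict form of the ultrametric inequality (a sum in which one term strictly dominates the others has the absolute value of that dominant term) then gives $|c_{I+J}|r^{I+J}=|a_Ib_J|r^{I+J}=||f||_r\,||g||_r$, which yields $||fg||_r\ge||f||_r\,||g||_r$ and hence equality. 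The only genuinely delicate point is the insistence on the \emph{smallest} maximising index rather than an arbitrary one: this is what makes the cross-term domination argument work symmetrically, and it is available precisely because the convergence hypothesis reduces the set of maximising indices to a finite set.
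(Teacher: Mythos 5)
Your proof is correct. The paper gives no argument for this Fact---it simply cites Robert's book---and what you write (the decay hypothesis makes $\|f\|_r,\|g\|_r$ finite and attained, then the Gauss-lemma computation on the coefficient $c_{I+J}$ with the \emph{smallest} maximising indices and the strict ultrametric inequality, combined with the submultiplicativity already recorded in the paper) is precisely the standard proof underlying the cited result, so there is nothing to add.
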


\begin{fact}
Let $f\in\mathbb{C}_p[[X]]$. If $f(0)=0$ and $||f||_r \le Cr$ for some positive real numbers $r$ and $C$, then $||f||_{r_1} \le Cr_1$ for $0<r_1 \le r$. 
\end{fact}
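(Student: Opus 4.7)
The plan is to expand $f$ as a power series and exploit the fact that the constant term vanishes so that every monomial carries a factor of $X$. The whole content of the fact is that rescaling by $r_1/r \le 1$ in each variable $X_j$ (here just one) costs at most one factor of $r_1/r$ when the power series has no constant term, because every monomial has total degree at least one.

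Concretely, I would write $f=\sum_{m\ge 1} a_m X^m$ with $a_m\in\mathbb{C}_p$; the lower bound $m\ge 1$ is exactly the hypothesis $f(0)=0$. The assumption $\|f\|_r\le Cr$ reads $|a_m|\,r^m \le Cr$ for every $m\ge 1$, equivalently $|a_m|\le Cr^{1-m}$. Now fix $r_1$ with $0<r_1\le r$ and any integer $m\ge 1$; then
$$|a_m|\,r_1^m \;\le\; Cr^{1-m}r_1^m \;=\; Cr\left(\frac{r_1}{r}\right)^{\!m} \;\le\; Cr\cdot\frac{r_1}{r} \;=\; Cr_1,$$
where the last inequality uses $0<r_1/r\le 1$ together with $m\ge 1$. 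Taking the supremum over $m\ge 1$ gives $\|f\|_{r_1}\le Cr_1$, which is the claim.

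There is no genuine obstacle: the proof is essentially one line. The only point to highlight is that the hypothesis $f(0)=0$ is used in an essential way, since otherwise a non-zero constant term $a_0$ would contribute $|a_0|$ to $\|f\|_{r_1}$ with no factor of $r_1$, and the conclusion $\|f\|_{r_1}\le Cr_1$ would fail as $r_1\to 0$.
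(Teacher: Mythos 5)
Your proof is correct: the coefficientwise estimate $|a_m|r_1^m = |a_m|r^m(r_1/r)^m \le Cr\,(r_1/r) = Cr_1$ for $m\ge 1$ is exactly the standard argument, and the paper itself states this fact without proof, so there is nothing to compare against. Your remark that $f(0)=0$ is essential (a constant term would not scale with $r_1$) is also the right point to flag.
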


\begin{fact}
Let $f=\sum_{m\ge 0}a_mX^m\in\mathbb{C}_p[[X]]$. Suppose that there exists a positive real number $r$ such that $|a_m|r^m$ goes to $0$ as $m$ goes to $+\infty$. Then
$||f||_r=||f_r||_1=\sup_{|x|\le 1}|f_r(x)|=\sup_{|x|\le r}|f(x)|$,
where $f_r(X):=f(rX)=\sum_{m\ge 0}a_mr^mX^m$ (see \cite[Proposition 1 of Section 6.1.4]{robert}).
\end{fact}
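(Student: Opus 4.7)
The three equalities decouple, and I would prove them in order; the middle one does all the real work.

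First, the equality $||f||_r=||f_r||_1$ is immediate from the definition of $||\cdot||_r$ applied to $f_r(X)=\sum_{m\ge 0}a_mr^m X^m$, since $||f_r||_1=\sup_m|a_m|r^m=||f||_r$. Likewise, the last equality $\sup_{|x|\le 1}|f_r(x)|=\sup_{|x|\le r}|f(x)|$ follows from the bijective substitution $y=rx$ between the closed balls of radius $1$ and $r$ in $\mathbb{C}_p$, under which $f_r(x)=f(y)$.

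The heart of the matter is the middle equality, which, setting $g:=f_r=\sum_{m\ge 0}b_mX^m$ with $b_m:=a_mr^m$, is the Gauss--sup identity $||g||_1=\sup_{|x|\le 1}|g(x)|$ for a power series whose coefficients satisfy $|b_m|\to 0$. One direction, $\sup_{|x|\le 1}|g(x)|\le ||g||_1$, is a termwise application of the ultrametric triangle inequality, since $|b_mx^m|\le|b_m|$ for $|x|\le 1$. For the reverse inequality, set $M:=||g||_1$ and note that $M$ is actually attained as a maximum because $|b_m|\to 0$; let $m_0$ be the largest index with $|b_{m_0}|=M$, so $|b_m|<M$ for every $m>m_0$. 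Pick any $c\in \mathbb{C}_p$ with $|c|=M$ and consider $g/c$: all of its coefficients lie in $\mathcal{O}_{\mathbb{C}_p}$, the one of index $m_0$ is a unit, and those of index $>m_0$ lie in the maximal ideal $\mathfrak{m}_{\mathbb{C}_p}$.

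Reducing the coefficients modulo $\mathfrak{m}_{\mathbb{C}_p}$ produces a nonzero polynomial $\bar g\in \overline{\mathbb{F}_p}[X]$ of degree exactly $m_0$ (the tail of index $>m_0$ vanishes coefficientwise, and the leading coefficient is nonzero). Since $\overline{\mathbb{F}_p}$ is infinite, there exists $\bar\xi\in\overline{\mathbb{F}_p}$ with $\bar g(\bar\xi)\ne 0$. Lifting to $\xi\in\mathcal{O}_{\mathbb{C}_p}$, the reduction of $g(\xi)/c$ equals $\bar g(\bar\xi)\ne 0$, which forces $|g(\xi)/c|=1$, hence $|g(\xi)|=M$. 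The only mildly delicate point is that evaluation at $\xi$ commutes with reduction for an infinite series; this is clear here because the tail $\sum_{m>m_0}(b_m/c)\xi^m$ has all terms of absolute value strictly less than $1$ and converges, so its sum also has absolute value strictly less than $1$ by the ultrametric inequality, and thus reduces to $0$. The main conceptual obstacle, such as it is, is the attainment of the supremum defining $||g||_1$, which the decay hypothesis $|b_m|\to 0$ provides for free and without which the argument breaks down.
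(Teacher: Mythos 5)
The paper gives no proof of this Fact (it simply cites \cite[Proposition 1 of Section 6.1.4]{robert}), and your argument is correct and is essentially the standard proof found there: after the harmless reductions $\|f\|_r=\|f_r\|_1$ and the substitution $y=rx$, you normalize the series so that its coefficients are integral with at least one unit coefficient, and evaluate at a lift of a point where the nonzero polynomial reduction over the infinite residue field $\overline{\mathbb{F}}_p$ does not vanish, the attainment of the sup of the coefficients being guaranteed by $|a_m|r^m\to 0$. The only implicit assumption---already built into the statement via $f_r(X)=f(rX)$---is that $r$ lies in the value group $|\mathbb{C}_p^\times|=p^{\mathbb{Q}}$, so that $r$ is realized as $|\rho|$ for some $\rho\in\mathbb{C}_p$ (for arbitrary real $r>0$ the outer equality $\|f\|_r=\sup_{|x|\le r}|f(x)|$ still follows by density of $p^{\mathbb{Q}}$ in $\mathbb{R}_{>0}$); and your handling of the tail is right, since its terms have absolute value $<1$ and tend to $0$, so their supremum is attained and the tail indeed lies in the maximal ideal.
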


Before we give the proof of Proposition \ref{prop:size_solution_ode}, we need the following auxiliary statements.

\begin{lemma}\label{lemma:change_of_variable}
Let $k$ be a positive integer, and let $b=\sum_{m\ge 2^k}b_mX^m \in \mathbb{C}_p[[X]]$ be a power series such that 
$||b||_r \le \frac{r}{p}$ for some real number $r \in ]0,1]$. Set $B=\sum_{m\ge 2^k}\frac{b_m}{m}X^m$. 
\begin{enumerate}
\item Let $r_1:=r\exp \big(- \frac{2}{p(p-1)}\log p\big)$.
Then $||B||_{r_1} \le p^{-\frac{2}{p-1}}$.
\item Let $r_1:=r\exp \big(-\frac{k+1}{2^k}\log 2\big)$.
Then $||B||_{r_2} \le p^{-\frac{2}{p-1}}$.
\end{enumerate}
\end{lemma}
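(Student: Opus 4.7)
The plan is to estimate each coefficient of $B$ and then take a supremum. From $\|b\|_r \le r/p$ we get $|b_m| \le r^{1-m}/p$ for every $m \ge 2^k$. Since $m$ is a positive integer, $|1/m| = p^{v(m)}$, where $v$ is the $p$-adic valuation, so for any $r' \in (0, r]$ each term satisfies
\[
|b_m/m|(r')^m \le \tfrac{r}{p}\, p^{v(m)}(r'/r)^m.
\]
Thus $\|B\|_{r'} \le \tfrac{r}{p}\, \sup_{m \ge 2^k} p^{v(m)}(r'/r)^m$, and the two parts amount to controlling this supremum for two different values of $r'$.

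For part (1), taking $r' = r_1 = r\, p^{-2/(p(p-1))}$ gives $(r_1/r)^m = p^{-2m/(p(p-1))}$, so one must bound $\sup_m p^{v(m) - 2m/(p(p-1))}$. For fixed $v = v(m)$ the exponent is strictly decreasing in $m$, so for each $v$ the supremum over $\{m : v(m) = v\}$ is attained at $m = p^v$ and equals $h(v) := v - 2 p^{v-1}/(p-1)$. The increment $h(v+1) - h(v) = 1 - 2 p^{v-1}$ shows that $h$ is non-decreasing up to $v = 1$ and strictly decreasing for $v \ge 1$, so $\max_{v \ge 0} h(v) = h(1) = 1 - 2/(p-1)$. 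This yields $\|B\|_{r_1} \le \tfrac{r}{p}\, p^{1 - 2/(p-1)} = r\, p^{-2/(p-1)} \le p^{-2/(p-1)}$, since $r \le 1$.

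For part (2), with $r' = r_2 = r\, 2^{-(k+1)/2^k}$, the fine analysis is unnecessary: the coarse bound $p^{v(m)} \le m$ (since $p^{v(m)} \mid m$) reduces the problem to bounding $g(m) := m\, 2^{-m(k+1)/2^k}$ on $[2^k, \infty)$. Its unique critical point $m^* = 2^k/((k+1)\log 2)$ lies strictly below $2^k$ as soon as $k \ge 1$, so $g$ is strictly decreasing on $[2^k, \infty)$ with maximum $g(2^k) = 2^k \cdot 2^{-(k+1)} = 1/2$. Hence $\|B\|_{r_2} \le r/(2p) \le 1/(2p) \le p^{-2/(p-1)}$, the last inequality being the elementary fact $p^{2/(p-1)} \le 2p$ for every prime $p$. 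The only real subtlety is in part (1): the coarse bound $p^{v(m)} \le m$ would be too lossy there, so one must retain $v = v(m)$ as the true discrete variable, and the precise shape of the radius $r_1$ is engineered so that the maximum of $h(v)$ lands exactly on $1 - 2/(p-1)$, realized at $v = 1$ (that is, at $m = p$).
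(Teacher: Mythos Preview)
Your proof is correct and follows the same coefficient-by-coefficient strategy as the paper: bound $|b_m/m|(r')^m$ using $\|b\|_r\le r/p$ and $|1/m|=p^{v(m)}$, then optimize over $m$. Your execution is in fact a bit more streamlined than the paper's---in part~(1) you reduce directly to the discrete function $h(v)=v-2p^{v-1}/(p-1)$ and read off its maximum at $v=1$, whereas the paper takes logarithms, writes $m=p^t u$, and analyzes the continuous auxiliary function $x\mapsto p^{-x}(-x+\tfrac{p-3}{p-1})$; in part~(2) both arguments use the crude bound $p^{v(m)}\le m$ (equivalently $\log|m|\ge -\log m$) and optimize $m\cdot 2^{-m(k+1)/2^k}$ on $[2^k,\infty)$.
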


\begin{proof}
Let $R$ be a positive real number. Then $||B||_{R} \le p^{-\frac{2}{p-1}}$ if and only if 
$$\log R \le \inf_{m\ge 2^k}\left\{\frac{1}{m}\Big(-\log|b_m|+\log|m|-\frac{2}{p-1}\log p\Big)\right\}.$$
By assumption, we have 
$$\log|b_m|+(m-1)\log r \le -\log p$$
for any integer $m \ge 2^k$, and hence
\begin{align*}
\inf_{m\ge 2^k}\left\{\frac{1}{m}\Big(-\log|b_m|+\log|m|-\frac{2}{p-1}\log p\Big)\right\} &\ge 
\inf_{m\ge 2^k}\left\{\frac{1}{m}\Big((m-1)\log r +\log|m| + \frac{p-3}{p-1}\log p\Big)\right\}\\
& \ge \log r + \inf_{m\ge 2^k}\left\{\frac{1}{m}\Big(\log|m| + \frac{p-3}{p-1}\log p\Big)\right\}
\end{align*}
using the fact that $r \le 1$.

\medskip

Notice that $\log|m| + \frac{p-3}{p-1}\log p\le - \log p + \frac{p-3}{p-1}\log p = -\frac{2}{p-1}\log p <0$ if $|m|<1$, and hence
$$\inf_{m\ge 2^k}\left\{\frac{1}{m}\Big(\log|m| + \frac{p-3}{p-1}\log p\Big)\right\}<0.$$
Then
\begin{align*}
\inf_{m\ge 2^k}\left\{\frac{1}{m}\Big(\log|m| + \frac{p-3}{p-1}\log p\Big)\right\} & \ge
\inf_{m\ge 1}\left\{\frac{1}{m}\Big(\log|m| + \frac{p-3}{p-1}\log p\Big)\right\} \\
& =
\inf_{m\ge 1\text{ such that }|m|<1}\left\{\frac{1}{m}\Big(\log|m| + \frac{p-3}{p-1}\log p\Big)\right\}\\
&=\inf_{t \ge 1\text{ and } u \ge 1 \text{ such that }|u|=1}\left\{\frac{1}{p^{t}u}\Big(-t + \frac{p-3}{p-1}\Big)\log p\right\}\\
&\ge \inf_{t \ge 1}\left\{\frac{1}{p^{t}}\Big(-t + \frac{p-3}{p-1}\Big)\log p\right\}.
\end{align*}
Now observe that $x \mapsto \frac{1}{p^{x}}\Big(-x + \frac{p-3}{p-1}\Big)$ is an increasing function on 
$x > \frac{p-3}{p-1}+\frac{1}{\log p}$. On the other hand, we have $\frac{p-3}{p-1}+\frac{1}{\log p} <2$. Thus 
$$\inf_{t \ge 1}\left\{\frac{1}{p^{t}}\Big(-t + \frac{p-3}{p-1}\Big)\log p\right\}=
\min_{t \in \{1,2\}}\left\{\frac{1}{p^{t}}\Big(-t + \frac{p-3}{p-1}\Big)\log p\right\}=
\frac{-2}{p(p-1)}\log p,$$
and hence 
$$\inf_{m\ge 2^k}\left\{\frac{1}{m}\Big(-\log|b_m|+\log|m|-\frac{2}{p-1}\log p\Big) \right\}\ge \log r - \frac{2}{p(p-1)}\log p.$$
This proves (1).

\medskip

We proceed to show (2). We have
\begin{align*}
\inf_{m\ge 2^k}\left\{\frac{1}{m}\Big(\log|m| + \frac{p-3}{p-1}\log p\Big)\right\}  & \ge 
\inf_{m\ge 2^k}\left\{\frac{1}{m}\Big(-\log m + \frac{p-3}{p-1}\log p\Big)\right\} \\
 & \ge \inf_{m\ge 2^k}\left\{\frac{1}{m}\Big(-\log m + \frac{p-3}{p-1}\log p\Big)\right\} \\
 & \ge \inf_{m\ge 2^k}\left\{-\frac{\log m}{m}\right\}-\frac{1}{2^k}\log 2.
\end{align*}

Notice that the function $x \mapsto -\frac{\log x}{x}$ is increasing on $\log x > 1$. Therefore,
if $k \ge 2$, then 
$$\inf_{m\ge 2^k}\left\{\frac{1}{m}\Big(-\log m \Big)\right\}= -\frac{k}{2^k}\log 2.$$
If $k=1$, then 
$$\inf_{m\ge 2^k}\left\{\frac{1}{m}\Big(-\log m \Big)\right\}= \min \left\{-\frac{1}{2}\log 2,-\frac{2}{2^2}\log 2\right\}=-\frac{1}{2}\log 2 \ge -\log 2.$$

This proves (2), completing the proof of the lemma.
\end{proof}

\begin{rem}
In the setup of Lemma \ref{lemma:change_of_variable}, notice that $\exp(B(x))$ is the unique formal power series solution of the differential equation $xy'=by$ such that $y(0)=1$. We will denote $B(x)$ by 
$\int_0^x\frac{b(u)}{u}du$. 
\end{rem}

\begin{lemma}\label{lemma:ode_special}
Let $k$ be a positive integer, and let $a=\sum_{m\ge 2^k}a_mX^m \in \mathbb{C}_p[[X]]$ be a power series such that $||a||_r \le \frac{r}{p}$ for some positive real number $r$.
Let $1 \le s \le p-1$ and $1 \le t \le p-1$ be relatively prime integers, and set 
$\alpha:=\frac{s}{t}$ and
$A=\sum_{m\ge 2^k}\frac{a_m}{m+\alpha}X^m$.  
\begin{enumerate}
\item Let $r_1:=r \exp\big(-\frac{t}{(p-1)^2}\log p\big)$. 
Then $||A||_{r_1} \le r_1$. 
\item Suppose that $2^k \ge \alpha+2$, and let $r_2:=r\exp\big(-\frac{kt}{2^{k-1}}\log 2\big)$. 
Then $||A||_{r_2} \le r_2$.
\end{enumerate}
\end{lemma}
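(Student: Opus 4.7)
The plan is to recast the claim $||A||_R \le R$ as a single uniform inequality of the form $(v(mt+s)-1)\log p \le (m-1)\epsilon$ for $m \ge 2^k$, and then to verify it in two different ways: via a sharp $p$-adic estimate for (1), and via a crude Archimedean estimate for (2). Since $1 \le t \le p-1$ is coprime to $p$, we have $|t|_p = 1$, so
\[
|m+\alpha|_p = |(mt+s)/t|_p = p^{-v(mt+s)}.
\]
The hypothesis $||a||_r \le r/p$ yields $\log|a_m| \le -\log p - (m-1)\log r$ for every $m \ge 2^k$. Writing $R = re^{-\epsilon}$, the desired bound $|a_m| R^{m-1}/|m+\alpha|_p \le 1$ is thus implied by
\[
(v(mt+s) - 1)\log p \le (m-1)\epsilon\qquad (m\ge 2^k).
\]
The cases $v(mt+s) \in \{0,1\}$ are automatic, so the content is in $j := v(mt+s) \ge 2$, where $p^j \mid mt+s$ with $mt+s > 0$ forces $mt+s \ge p^j$, hence $(m-1)t \ge p^j - s - t \ge p^j - 2(p-1)$.

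For (1), with $\epsilon_1 = t\log p/(p-1)^2$, the target becomes $(j-1)(p-1)^2 \le (m-1)t$, and by the preceding estimate it suffices to prove the elementary numerical inequality
\[
(j-1)(p-1)^2 + 2(p-1) \le p^j\qquad (j\ge 2,\ p\ge 3).
\]
I would establish this by induction on $j$. The base case $j=2$ reads $(p-1)(p+1) = p^2 - 1 \le p^2$. For the inductive step, adding $(p-1)^2$ to both sides of the inductive hypothesis gives $j(p-1)^2 + 2(p-1) \le p^j + (p-1)^2$, and $p^j + (p-1)^2 \le p^{j+1}$ reduces to $(p-1)^2 \le p^j(p-1)$, i.e.\ $p-1 \le p^j$, obvious for $j \ge 1$.

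For (2), with $\epsilon_2 = kt\log 2/2^{k-1}$, the $p$-adic bound above is too weak; I would instead use the crude Archimedean estimate $v(mt+s)\log p \le \log(mt+s) \le \log((m+1)(p-1))$, which gives $(v(mt+s)-1)\log p \le \log((m+1)(p-1)/p) \le \log(m+1)$. It then suffices to show $h(m):=\log(m+1)/(m-1) \le kt\log 2/2^{k-1}$ for every $m \ge 2^k$. A direct derivative computation shows $h$ is strictly decreasing on $[2,\infty)$ (the sign of $h'(m)$ is that of $(m-1)/(m+1)-\log(m+1) \le 1 - \log 3 < 0$), so the supremum on $[2^k,\infty)$ is attained at $m=2^k$. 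Bounding $\log(2^k+1) \le (k+1)\log 2$, the desired inequality reduces to $(k+1)2^{k-1} \le kt(2^k-1)$; for $t \ge 1$ this is in turn equivalent to $k/2^{k-1} \le k-1$, which holds for every $k \ge 2$. The hypothesis $2^k \ge \alpha + 2 > 2$ forces $k \ge 2$, so this is the only case to examine.

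The main obstacle is the elementary numerical inequality of (1); once granted, both parts reduce to routine bookkeeping together with the monotonicity check of $h$ in (2), and I expect no further substantial difficulty beyond keeping track of constants.
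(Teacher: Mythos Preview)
Your proof is correct and follows essentially the same route as the paper's: both reduce $\|A\|_R\le R$ to the single inequality $(v(mt+s)-1)\log p\le (m-1)\epsilon$ (the paper writes this after the substitution $m\mapsto tm+s$, you keep the original variable), then verify it by a $p$-adic estimate for (1) and an Archimedean bound $v(n)\log p\le\log n$ for (2). The only differences are in bookkeeping: for (1) the paper locates the minimum of $k\mapsto (1-k)/p^k$ by a calculus argument, whereas your induction on $j$ in $(j-1)(p-1)^2+2(p-1)\le p^j$ is a bit cleaner; for (2) the paper uses the hypothesis $2^k\ge\alpha+2$ to obtain $m-s-t\ge m/2$ and then bounds $\sup_{m\ge 2^k}(\log m)/m$, while you use the hypothesis only to force $k\ge 2$ and bound $\sup_{m\ge 2^k}\log(m+1)/(m-1)$ directly. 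One small wording slip: the inequality $(k+1)2^{k-1}\le kt(2^k-1)$ is not \emph{equivalent} to $k/2^{k-1}\le k-1$ for general $t\ge 1$; rather, the latter is the case $t=1$ and hence suffices. This does not affect the argument.
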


\begin{proof}
Let $R$ be a positive real number. Then $||A||_{R} \le R$ if and only if 
$$\log R \le \inf_{m \ge 2^k}\left\{-\frac{1}{m-1}\log\frac{|a_m|}{|m+\alpha|}\right\}.$$

By assumption, we have $\log|a_m|+(m-1)\log r \le -\log p$
for any $m \ge 2^k$. Notice also that $|s|=|t|=1$. Thus
\begin{align*}
\inf_{m \ge 2^k}\left\{-\frac{1}{m-1}\log\frac{|a_m|}{|m+\alpha|}\right\} &\ge \log r + \inf_{m\ge 2^k}\left\{\frac{1}{m-1}\big(\log p + \log |tm+s|\big)\right\}\\
&\ge \log r + \inf_{m\ge 2^kt+s}\left\{\frac{t}{m-s-t}\big(\log p + \log |m|\big)\right\}\\
&\ge \log r + \inf_{m\ge 2t+s}\left\{\frac{t}{m-s-t}\big(\log p + \log |m|\big)\right\}.
\end{align*}

\medskip

Note that $\log p + \log |m| < 0$ if $|m|<|p|$, and that $m-s-t \ge t \ge 1$. Hence
$$\inf_{m\ge 2t+s}\left\{\frac{t}{m-s-t}\big(\log p + \log |m|\big)\right\} < 0,$$
and thus
\begin{align*}
\inf_{m\ge 2t+s}\left\{\frac{t}{m-s-t}\big(\log p + \log |m|\big)\right\} & =
\inf_{m\ge 2t+s\text{ and }|m|<|p|}\left\{\frac{t}{m-s-t}\big(\log p + \log |m|\big)\right\}\\
&= \inf_{k\ge 2\text{ and } u \ge 1 \text{ such that }|u|= 1}\left\{\frac{t}{p^ku-s-t}\big(1 - k \big)\log p\right\}\\
&\ge \inf_{k\ge 2}\left\{\frac{t}{p^k-2(p-1)}\big(1 - k \big)\log p\right\}\\
&\ge \inf_{k\ge 2}\left\{\frac{t}{p^k\Big(1-\frac{2(p-1)}{p^2}\Big)}\big(1 - k \big)\log p\right\}\\
&=\frac{t\log p}{\Big(1-\frac{2(p-1)}{p^2}\Big)}\inf_{k\ge 2}\frac{1-k}{p^k}.
\end{align*}
On the other hand, the function $x \mapsto \frac{1-x}{p^x}$ is increasing on $x>1+\frac{1}{\log p}$. This immediately implies 
$$\inf_{k\ge 2}\frac{1-k}{p^k}=-\frac{1}{p^2},$$ and hence

\begin{align*}
\frac{t\log p}{\Big(1-\frac{2(p-1)}{p^2}\Big)}\inf_{k\ge 2}\frac{1-k}{p^k}
&=-\frac{t}{p^2\Big(1-\frac{2(p-1)}{p^2}\Big)}\log p\\
&=-\frac{t}{p^2-2p+2}\log p\\
&\ge -\frac{t}{(p-1)^2}\log p.
\end{align*}
This proves (1).

\medskip

We proceed to show (2). Suppose that $2^k \ge \alpha+2$. If moreover $m\ge 2^kt+s$, then 
$$\frac{m}{2}-s-t \ge \frac{1}{2}(2^kt+s)-s-t\ge \frac{1}{2}(s+2t+s)-s-t=0,$$ and hence $m-s-t \ge \frac{m}{2}$. Notice in addition that we must have $k\ge 2$ since $\alpha>0$ by assumption.
Then, we have

\begin{align*}
\inf_{m\ge 2^kt+s}\left\{\frac{t}{m-s-t}\big(\log p + \log |m|\big)\right\} & \ge 
\inf_{m\ge 2^kt+s}\left\{\frac{t}{m-s-t}\big(\log p - \log m\big)\right\}\\
& \ge -\inf_{m\ge 2^kt+s}\left\{\frac{t}{m-s-t}\log m\right\}\\
& \ge -2t \inf_{m\ge 2^kt+s}\frac{\log m}{m}\\
& \ge -2t \inf_{m\ge 2^k}\frac{\log m}{m}\\
& = -\frac{kt}{2^{k-1}}\log 2.
\end{align*}
This completes the proof of the lemma.
\end{proof}

\begin{rem}\label{remark:ode_special}
In the setup of Lemma \ref{lemma:ode_special}, note that $A$ is the unique formal power series solution of the differential equation $$xy'+\alpha y=a$$ with $y(0)=y'(0)=0$.
\end{rem}

We are now in position to prove Proposition \ref{prop:size_solution_ode}.

\begin{proof}[{Proof of Proposition \ref{prop:size_solution_ode}}]
Notice that $m+\alpha \neq 0$ for any integer $m \ge 2$. Item (1) then follows easily.

\medskip

The proof of Item (2) is subdivided into a number of steps.

\medskip

\noindent\textit{Step 1.} Let $m\ge 2$ be an integer. Observe that 
$v(m+\alpha)=v(tm+s) \le \frac{\log(tm+s)}{\log p}$. Therefore, \cite[Theorem 2]{sibuya_sperber_1} applies to show that the formal solution $y$ is convergent.

Set $$c(X_1,X_2):=\sum_{m\ge 2}c_m(X_1)X_2^m \in \mathbb{C}_p[[X_1,X_2]],$$
and consider the formal power series 
$$B(x):=\int_0^x\frac{b(u)}{u}du\quad\text{and}\quad z(x):=y(x)\exp(-B(x)).$$
Note that $z(0)=z'(0)=0$. Then $y$ is solution of Equation \eqref{eq:edo_-1} if and only if $z$ is solution of 
\begin{equation*}
xz'(x)+\alpha z(x) =\exp(-B(x))\big(a(x)+c(x,z(x)\exp(B(x)))\big).
\end{equation*}
By Lemma \ref{lemma:change_of_variable} (1), $B(x)$ converges if
$|x| \le r_0:=r \exp\big(-\frac{2}{p(p-1)}\log p\big)$, and $|B(x)|\le p^{-\frac{2}{p-1}} <p^{-\frac{1}{p-1}}$. In particular, $\exp(\pm B(x))$ is well-defined. 
Set
$$a_{0}(x):=a(x)\exp(-B(x))) \quad \text{and} \quad c_{0,m}(x):= c_{m}(x)
\exp((m-1)B(x))$$
for all $m\ge 2$, so that 
$$\exp(-B(x))c(x,z(x)\exp(B(x)))=\sum_{m\ge 2}c_{0,m}(x)z(x)^m.$$

Let $r_1:=r \exp\big(-\frac{3}{p(p-1)}\log p\big) <r_0$.
Then \cite[Theorem of Section 6.1.5]{robert} applies to show that $\exp(\pm B)(x)$ converges and that $\exp(\pm B)(x)=\exp(\pm (B(x))$ if $|x|\le r_1$. 
In addition, we have $||\exp(\pm B)||_{r_1}=\sup_{|x|\le r_1}|\exp(\pm B)(x)|=\sup_{|x|\le r_1}|\exp(\pm (B(x))|$. On the other hand, 
$\sup_{|x|\le r_1}|\exp(\pm(B(x))|\le 1$ since $|m!|\ge p^{-\frac{m}{p-1}}$ for every integer $m\ge 1$. It follows that $$||\exp(\pm B)||_{r_1}=1$$ since $B(0)=0$. As a consequence, we have
$$||a_0||_{r_1}=||a||_{r_1}||\exp(-B)||_{r_1}=||a||_{r_1}\le \frac{r_1}{p},$$ 
$$||c_{0,m}||_{r_1}=||c_{m}||_{r_1} \le ||c_{m}||_{r} \le \frac{1}{p},$$ and 
$$||y||_{R}=||z||_{R}$$ for any real number $0<R\le r_1$.

\medskip

Replacing $r$ by $r_1$, if necessary, we may therefore assume without loss of generality that $b=0$, so that Equation \eqref{eq:edo_-1} reads
\begin{equation}\label{eq:edo_-0}
xy'+\alpha y = a+\sum_{m\ge 2}c_my^m.
\end{equation}

\medskip

\noindent\textit{Step 2.} The proof of \cite[Theorem 2]{sibuya_sperber_1} then goes as follows. Let $z_0 \in \mathbb{C}_p[[X]]$ be the unique formal power series solution of 
$$
xz_0'(x)+\alpha z_0(x)=a(x)
$$ 
with $z_0(x)=O(x^2)$ as $x$ goes to $0$. Set $b_0=0$. Observe that $a_1(x):=c(x,z_0(x))=O\big(x^{2^2}\big)$ as $x$ goes to $0$.
Set $b_1(x):=\partial_{x_2}c(x,z_0(x))$, and 
let $z_1 \in \mathbb{C}_p[[X]]$ be the unique formal power series solution of 
$$
xz_1'(x)+\alpha z_1(x)=a_1(x)+b_1(x)z_1(x)
$$
with $z_1(x)=O\big(x^{2^2}\big)$ as $x$ goes to $0$. Next, one defines inductively $z_k \in \mathbb{C}_p[[X]]$ for all integer $k \ge 2$ as follows. Set 
$$y_k:=\sum_{i=0}^{k}z_i$$
for $k \ge 0$. If $k \ge 2$, set 
$$a_k(x):=c(x,y_{k-1}(x))-c(x,y_{k-2}(x))-z_{k-1}(x)\partial_{x_2}c(x,y_{k-2}(x))$$
and
$$b_k(x):=\partial_{x_2}c(x,y_{k-1}(x)).$$
For $k\ge 2$, one then proves that there exists a unique formal power series $z_k$ solution of
\begin{equation}\label{eq:edo_k}
xz_k'(x)+\alpha z_k(x)=a_k(x)+b_k(x)z_k(x)
\end{equation}
with 
$z_k(x)=O\big(x^{2^{k+1}}\big)$. 
Finally, one proves that $z_k$ converges for all $k \ge 0$ as well as $y:=\sum_{k\ge 0}z_k$ and that $y$ is the unique solution formal power series solution of Equation \eqref{eq:edo_-0} with $y(x)=O(x^2)$ as $x$ goes to $0$.

\medskip

\noindent\textit{Step 3.} 
Set $B_k(x):=\int_0^x\frac{b_k(u)}{u}du$. Notice that, for any integer $k\ge 1$, the formal power series $z_k$ is solution of Equation $\eqref{eq:edo_k}$ if and only if 
$$w_k(x):=z_k(x)\exp(-B_k(x))$$ is solution of 
$$xw_k'(x)+\alpha w_k(x) =\exp(-B_k(x))a_k(x).$$

\medskip

Let $k_1$ be the smallest positive integer such that $\frac{k_1+1}{2^{k_1}}\le \frac{1}{p^2}$. Then $2^{k_1} \ge p^2+1 \ge p+1 \ge \alpha+2$. Notice that $k_1 \le 5 \log p$. Indeed, let $k$ be any integer such that $k \ge \frac{2}{\log 2}\log p+1$. Then $\frac{k+1}{2^k} \le \frac{1}{2^{k-1}}\le \frac{1}{p^2}$. It follows that 
$k_1 \le \frac{2}{\log 2}\log p+2 \le 3 \log p +2 \le 5 \log p$.

Then, we define inductively a decreasing sequence $(r_k)_{k \ge 0}$ of positive real numbers such that the following holds. For any integer $k \ge 0$,  if $|x|\le r_k$, then $\exp(\pm B_k)(x)$ converges, $\exp(\pm B_k)(x)=\exp(\pm (B_k(x))$, and $z_k(x)$ converges as well. In addition, $$||\exp(\pm B_k)||_{r_k}=1,$$ and $$||z_{k}||_{r_{k}} \le r_{k}.$$ Finally, we will show that 
the limit $r_\infty$ of the sequence $(r_k)_{k \ge 0}$ satisfies 
$$\log r - \log r_\infty \le Ct\frac{(\log p)^2}{p^2}$$
for some constant $C>0$. In particular, $r_\infty > 0$.

Set $r_0:= r \exp\big(-\frac{t}{(p-1)^2}\log p\big)$. By Lemma \ref{lemma:ode_special} (1), $||z_0||_{r_0} \le r_0$. Moreover, $B_0=0$ since $b_0=0$.

Let now $k$ be a positive integer. Suppose $r_{k-1} < r_{k-2} < \cdots < r_0$ have already been defined. For any integer $0 \le j \le k-1$, we have 
$$||y_j||_{r_{k-1}} \le \max_{0\le i \le j}||z_i||_{r_{k-1}} \le r_{k-1},$$
and $y_j(x)$ converges if $|x|\le r_{k-1}$. Moreover, 
\begin{align*}
||b_{k}||_{r_{k-1}}  & = ||\partial_{x_2}c\big(x,y_{k-1}(x)\big) ||_{r_{k-1}} \\
& = ||\sum_{m\ge 2} mc_m y_{k-1}^{m-1}||_{r_{k-1}}\\
& \le \max_{m\ge 2} \left\{ |m|||c_m||_{r_{k-1}} ||y_{k-1}||^{m-1}_{r_{k-1}} \right\}\\
& \le \max_{m\ge 2} \left\{ ||c_m||_{r} r_{k-1}^{m-1} \right\}\\
& \le \frac{r_{k-1}}{p}.
\end{align*}
Similarly, we have
$$||a_{k}||_{r_{k-1}} 
= ||c(x,y_{k-1}(x))-c(x,y_{k-2}(x))-z_{k-1}(x)\partial_{x_2}c(x,y_{k-2}(x))||_{r_{k-1}}\le \frac{r_{k-1}}{p}.$$

Suppose first $k<k_1$. Set  
$$r_k:=r_{k-1} \exp\Big(-\frac{2t}{(p-1)^2}\log p\Big) \le r_{k-1} \exp\Big( -\frac{2}{p(p-1)}\log p\Big) < r_{k-1}.$$
By Lemma \ref{lemma:change_of_variable} (1) applied to $b_k$ and 
Lemma \ref{lemma:ode_special} (1) applied to $\exp(-B_k(x))a_k(x)$ and arguing as in Step 1, we see that $r_k$ satisfies all the conditions listed above.

Suppose now that $k \ge k_1$, and set  
$$r_k:=r_{k-1}\exp\Big(-\frac{(k+1)t}{2^{k-1}}\log 2\Big)\le r_{k-1}\exp\Big( -\frac{k+1}{2^k}\log 2\Big)<r_{k-1}.$$ 
Applying Lemma \ref{lemma:change_of_variable} (2) to $b_k$ and 
Lemma \ref{lemma:ode_special} (2) to $\exp(-B_k(x))a_k(x)$ and arguing again as in Step 1, we see that $r_k$ also satisfies all the conditions listed above.

Finally, we have
$$\sum_{1 \le k \le k_1-1} (\log r_{k-1} -\log r_{k})=\sum_{1 \le k \le k_1-1}\frac{2t}{(p-1)^2}\log p = \frac{2(k_1-1)t}{(p-1)^2}\log p \le 10t\frac{(\log p)^2}{(p-1)^2},$$
and
\begin{align*}
\sum_{k\ge k_1} (\log r_{k-1} -\log r_{k}) & = \sum_{i \ge k_1}\frac{(k+1)t}{2^{k-1}}\log 2 \\
& \le t\log 2 \int_{k_1}^{+\infty}\frac{x+1}{2^{x-1}}dx\\
& = \frac{t}{2^{k_1-1}}\left(\frac{1}{\log 2}+k_1+1\right)\\
& \le 4t \frac{k_1+1}{2^{k_1}}\\
& \le \frac{4t}{p^2}.
\end{align*}
Therefore, we have
$$\log r - \log r_\infty = \sum_{k\ge 1} (\log r_{k-1} -\log r_{k})\le 10t\frac{(\log p)^2}{(p-1)^2} + \frac{4t}{p^2}
\le 14t\frac{(\log p)^2}{(p-1)^2}.$$
Set $C:= 14$, and $R:=r \exp\big(-tC\frac{(\log p)^2}{p^2}\big)$. Then $R \le r_k$ for any $k \ge 0$, and $||y||_R \le R$ since $||y_k||_R\le R$ for each $k\ge 0$.
This completes the proof of the proposition.
\end{proof}

\section{Proof of Theorem \ref{thm_intro}}

In this section we prove our main result. Note that Theorem \ref{thm_intro} is an immediate consequence of Theorem \ref{thm:A-analyticity} below.

\medskip

Let $X$ a smooth complex quasi-projective surface, and let $L$ be a foliation on $X$. Let $P$ be a singular point of $L$, and let $D$ be a vector field on some open subset $U \ni P$ such that $L|_U=\sO_U D$. Let $\alpha_1$ and $\alpha_2$ be the eigenvalues of the linear part of $D$ at $P$. Recall that $P$ is a \textit{reduced singularity} of $L$ if at least one of the $\alpha_i$'s is non-zero, say $\alpha_2$, and  $\frac{\alpha_1}{\alpha_2}$ is not a positive rational number. A reduced singularity $P$ is called \textit{non-degenerate} if both $\alpha_1$ and $\alpha_2$ are non-zero. Then $\big\{\alpha,\frac{1}{\alpha}\big\}$ does not depend on the choice of $D$, where $\alpha:=\frac{\alpha_1}{\alpha_2}$. Finally, recall from \cite[Appendice II]{mattei_moussu}, that if $P$ is a non-degenerate reduced singularity, then there are exactly two analytic curves in $X$ passing through $P$ and invariant under $L$. They are smooth and intersect transversely at $P$.

\begin{thm}\label{thm:A-analyticity}
Let $X$ be a smooth quasi-projective surface over a number field $K$, and let $L$ be a foliation on $X$. Suppose that $L$ is closed under $p$-th powers for almost all primes $p$.
Let $P\in X(K)$ be a singular point of $L$. Suppose that $P_\mathbb{C}$ is a reduced singularity of $L_\mathbb{C}$ for some embedding $K \subset \mathbb{C}$. Then the following holds.
\begin{enumerate}
\item The foliation $L$ has a non-degenerate singularity at $P$, and $-\alpha_{P_\mathbb{C}}(L_\mathbb{C}) \in \mathbb{Q}_{> 0}$. In particular, $P_\mathbb{C}$ is a reduced singularity of $L_\mathbb{C}$ for any embedding $K \subset \mathbb{C}$.
\item There exist two $\wh{L}$-invariant smooth formal subschemes $\wh{V}$ and $\wh{W}$ of the completion $\wh{X}_P$ of $X$ at $P$ defined over $K$, where $\wh{L}$ denotes the $($formal$)$ foliation induced by $L$ on $\wh{X}_P$. In particular, $\wh{V}_\mathbb{C}$ and $\wh{W}_\mathbb{C}$ are the formal completion at $P$ of the two separatrices of $L_\mathbb{C}$ through $P_\mathbb{C}$ for any embedding $K \subset \mathbb{C}$.
\item  The formal curves $\wh{V}$ and $\wh{W}$ are $A$-analytic.
\end{enumerate}
\end{thm}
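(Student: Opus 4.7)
The plan is to prove the three assertions in order, with the main effort directed at (3). For assertion (1), fix a local $K$-generator $D$ of $L$ near $P$, integral on a model $\sX$ over $R[1/N]$ for $N$ sufficiently divisible, and let $M_P$ be the Jacobian of $D$ at $P$, with eigenvalues $\alpha_1,\alpha_2 \in \bar K$. For every $\mathfrak p \nmid N$, the $p$-th-power closure condition yields $D^{\,p} \equiv f_\mathfrak p D \pmod{\mathfrak p}$ in $\textup{Der}_{k_\mathfrak p}(\sO_{\sX_\mathfrak p})$, and evaluating the induced endomorphism on the Zariski tangent space at $P$ gives $M_P^{\,p} = f_\mathfrak p(P)\,M_P$ in $\textup{End}(T_{\sX_\mathfrak p,P})$. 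Hence if both eigenvalues are non-zero one has $\alpha_1^{\,p-1}=\alpha_2^{\,p-1}$ in $\overline{k_\mathfrak p}$, so $\alpha_1/\alpha_2 \in \mathbb F_p^\times$; a Chebotarev density argument (an algebraic number whose reduction modulo almost every prime lies in the prime field must be rational) combined with the reduced-singularity hypothesis then yields $-\alpha_1/\alpha_2 \in \mathbb Q_{>0}$. The saddle-node possibility $\alpha_1 = 0$, $\alpha_2 \ne 0$ is ruled out by a direct calculation in the corresponding Dulac normal form: one checks that $D^{\,p} \notin \sO_{\sX_\mathfrak p, P} \cdot D$ modulo $\mathfrak p$ for infinitely many $\mathfrak p$. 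For assertion (2), write $-\alpha_2/\alpha_1 = s/t$ in lowest terms and choose $K$-coordinates $(x,y)$ at $P$ diagonalizing $M_P$. The $\wh L$-invariance condition for a graph $y = \phi(x)$ with $\phi \in X^2 K[[X]]$ translates into the ODE
\[
x\phi'(x) + \tfrac{s}{t}\,\phi(x) \;=\; a(x) + b(x)\phi(x) + \sum_{m \ge 2} c_m(x)\phi(x)^m,
\]
with $a,b,c_m \in K[[X]]$ determined by the Taylor expansion of $D$. Since $m + s/t \ne 0$ for every $m \ge 2$, the coefficient recursion has a unique solution $\phi \in X^2 K[[X]]$, defining $\wh V$; $\wh W$ is constructed symmetrically by exchanging $x$ and $y$.

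For assertion (3), the archimedean case follows from the classical Briot--Bouquet theorem applied to the above ODE, which is of regular-singular type at a non-degenerate reduced singularity. For a non-archimedean place with $\mathfrak p \nmid N$ of odd residue characteristic $p > \max(s,t)$ (only finitely many $\mathfrak p$ are thereby excluded, and they contribute boundedly to the $A$-analyticity sum), the strategy is to put $D$ into normal form modulo $\mathfrak p$ so that Proposition~\ref{prop:size_solution_ode} applies with $r = 1$. The key step is a $\mathfrak p$-adic linearization: using the $p$-th-power closure of $L$ together with the non-resonance of $\bar\alpha_1, \bar\alpha_2 \in k_\mathfrak p^\times$ (ensured by $p > \max(s,t)$), one produces a formal $R_\mathfrak p$-integral coordinate change after which
\[
D \;\equiv\; \alpha_1 x\,\partial_x + \alpha_2 y\,\partial_y \pmod{\mathfrak p}.
\]
Granting this, the coefficients $a,b,c_m$ of the rewritten ODE all lie in $\mathfrak p\cdot R_\mathfrak p[[X]]$, so $\|a\|_1, \|b\|_1 \le 1/p$ and $\|c_m\|_1 \le 1/p$, with $a(0)=a'(0)=b(0)=0$. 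Proposition~\ref{prop:size_solution_ode} then yields $\|\phi\|_R \le R$ for $R := \exp\bigl(-Ct(\log p)^2/p^2\bigr)$, and Lemmas~\ref{lemma:size_graph_1} and~\ref{lemma:size_graph_2} convert this into the geometric size bound $S_{\sX_{R_\mathfrak p}}(\wh V_{K_\mathfrak p}) \ge R$. Summing over $\mathfrak p$, one finds $\sum_\mathfrak p \log(1/S) \le Ct \sum_p (\log p)^2/p^2 + O(1) < +\infty$ since $t$ depends only on $(X,L,P)$ and $\sum_p (\log p)^2/p^2$ converges; this proves $\wh V$ is $A$-analytic, and $\wh W$ is treated symmetrically.

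The main obstacle is the $\mathfrak p$-adic linearization step. It is an arithmetic refinement of Poincar\'e's linearization theorem: the $p$-th-power closure furnishes enough relations in the restricted Lie algebra $\textup{Der}_{k_\mathfrak p}(\sO_{\sX_\mathfrak p, P})$ to iteratively eliminate each non-resonant homogeneous part of $D$ modulo $\mathfrak p$, while the non-resonance of $\bar\alpha_1, \bar\alpha_2$ (guaranteed by $p > \max(s,t)$) ensures there are no resonant obstructions to handle separately. Once this normal form is secured, the reduction to Proposition~\ref{prop:size_solution_ode}, the conversion to geometric sizes via Lemmas~\ref{lemma:size_graph_1}--\ref{lemma:size_graph_2}, and the summation over primes are all essentially formal, and the archimedean analyticity is classical.
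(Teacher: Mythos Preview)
Your treatment of (1) and (2) parallels the paper's, though you argue (1) directly where the paper cites \cite[Proposition~II.1.3]{mcquillan08}. The Chebotarev step is standard, but the saddle-node exclusion via ``a direct calculation in the Dulac normal form'' is left as a black box; this is exactly the nontrivial part of McQuillan's result.

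For (3) there is a real gap at the reduction to Proposition~\ref{prop:size_solution_ode}. After your $R_\mathfrak{p}$-integral coordinate change linearizing $D$ only modulo $\mathfrak{p}$, one has
\[
D \;=\; (\alpha_1 Y_1 + h_1)\,\partial_{Y_1} + (\alpha_2 Y_2 + h_2)\,\partial_{Y_2}, \qquad h_1, h_2 \in \varpi_\mathfrak{p}\, R_\mathfrak{p}[[Y_1, Y_2]],
\]
and the invariance equation for the separatrix $Y_2 = \phi(Y_1)$ reads
\[
\alpha_1 T\,\phi'(T) - \alpha_2\,\phi(T) \;=\; h_2\bigl(T,\phi(T)\bigr) \;-\; \phi'(T)\, h_1\bigl(T,\phi(T)\bigr).
\]
The term $\phi'(T)\, h_1(T,\phi(T))$ on the right is \emph{not} of the shape allowed by Proposition~\ref{prop:size_solution_ode}, and it cannot be absorbed: moving it to the left makes the coefficient of $\phi'$ depend nonlinearly on $\phi$. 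The paper circumvents this by first blowing up $X$ at $P$. The exceptional divisor is then an honest $R[1/N]$-integral $M$-invariant curve through the new singular point, so one may take it as a coordinate axis and obtain $D(Y_1) = Y_1$ \emph{exactly}; hence $h_1 = 0$ and the ODE for the remaining separatrix is literally of the form required. The mod-$\mathfrak{p}$ diagonalization (via \cite{aramova_avramov}, using $\wb{\sD}_\mathfrak{p}^{\,p}=\wb{\sD}_\mathfrak{p}$) is then needed only for the second coordinate. Lemma~\ref{lemma:size_graph_2} finally transports the resulting size bound from the blow-up back down to $X$. In your proposal the blow-up is absent and Lemma~\ref{lemma:size_graph_2} is invoked without a clear role; note that arranging $h_1 = 0$ over $R_\mathfrak{p}$ without the blow-up would amount to already knowing an $R_\mathfrak{p}$-integral equation for the companion separatrix $\wh{W}$, which is precisely what is at stake.
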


\begin{proof}
Replacing $X$ by a Zariski open neighborhood of $P$ in $X$, if necessary,
we may assume without loss of generality that $X$ is affine, and that $L=\sO_X D$ for some vector field $D\in T_X \cong\textup{Der}_{k}(\sO_X)$. Let $\wb{K}$ be an algebraic closure of $K$, and let 
$\alpha_1 \in \wb{K}$ and $\alpha_2 \in \wb{K}$ be the eigenvalues of the linear part of $D$ at $P$. Suppose $\alpha_2 \neq 0$.
By \cite[Proposition II.1.3]{mcquillan08}, $D$ is formally linearisable at $P$ and
$\frac{\alpha_1}{\alpha_2} \in \mathbb{Q}$. This immediately implies that $\alpha_1\neq 0$ since $D$ has isolated zeroes by assumption. In addition, we must have $-\alpha_{P_\mathbb{C}}(L_\mathbb{C}) = -\frac{\alpha_1}{\alpha_2}\in \mathbb{Q}_{> 0}$ for any embedding $K \subset \mathbb{C}$, proving (1).

Let us write $\alpha=-\frac{s}{t}$, where $s$ and $t$ are relatively prime positive integers, and let
$K \subset \mathbb{C}$ be an embedding. Let $x_1$ and $x_2$ be
regular functions on $X$ such that the induced map 
$X \to \mathbb{A}_K^2$ is \'etale at $P$, and maps $P$ to $0$. Shrinking $X$ further, we may assume that $X \to \mathbb{A}_K^2$ is \'etale.
Let $\wh{D}$ denote the $K$-derivation of $\wh{\sO}_{X,P}\cong K[[X_1,X_2]]$ induced by
$D$. We may assume without loss of generality that 
$$\wh{D}=(X_1+f_1(X_1,X_2))\partial_{X_1}+(\lambda X_2+f_2(X_1,X_2))\partial_{X_2},$$  
where $f_1$ and $f_2$ are formal power series with coefficients in $K$ vanishing to order at least $2$ at $P$. Let $\phi_1$ and $\phi_2$ be formal power series with coefficients in $K$ vanishing to order at least $2$ at $0$. Then the formal (smooth) curves $$T\mapsto(\phi_1(T),T)\quad\text{and}\quad T\mapsto (T,\phi_2(T))$$ are invariant under $\wh{D}$ if and only if 
$$\wh{D}(X_1-\phi_1(X_2)) \in (X_1-\phi_1(X_2))\quad\text{and}\quad \wh{D}(X_2-\phi_2(X_1)) \in (X_2-\phi_2(X_1))$$
if and only if
\begin{equation}\label{eq:separatrix_1}
\phi_1(T)+f_1(\phi_1(T),T)-\phi'_1(T)\big(\lambda T+f_2(\phi_1(T),T)\big)=0
\end{equation}
and
\begin{equation}\label{eq:separatrix_2}
\lambda\phi_2(T)+f_2(T,\phi_2(T))-\phi'_2(T)\big(T+f_1(T,\phi_2(T))\big)=0.
\end{equation}
One readily checks that there is a unique formal power series $\phi_1\in K[[T]]$ (resp. $\phi_2\in K[[T]]$)
vanishing to order at least $2$ at $0$ solution of Equation \eqref{eq:separatrix_1}
(resp. Equation \eqref{eq:separatrix_2}) since $-\alpha\in\mathbb{Q}_{>0}$. This proves (2).

\medskip

We will denote by $\wh{V}$ (resp. $\wh{W}$) the formal curve $T\mapsto(\phi_1(T),T)$ (resp. $T\mapsto (T,\phi_2(T))$).

\medskip

The proof of Item (3) is subdivided into a number of steps.

\medskip

\noindent\textit{Step 1.} Let $v$ be a place of $K$, and let $K_v$ be the completion of $K$ at $v$. Notice first that $\wh{V}_{K_v}$ and $\wh{W}_{K_v}$ are $K_v$-analytic by 
\cite[Theorem 2]{sibuya_sperber_1} if $v$ is a finite place and \cite[Appendice II]{mattei_moussu} if $v$ is archimedean.

\medskip

\noindent\textit{Step 2.} Suppose that the (algebraic) curve $\{x_1\}=0$ is $\sL$-invariant. 
Shrinking $X$ further, if necessary, we may therefore assume that 
$$D=x_1\partial_{x_1}+f(x_1,x_2)\partial_{x_2},$$
where $f$ is a regular function on $X$.

Let $R$ be the ring of integers of $K$. If $N$ denotes a sufficiently divisible positive integer, there exists a model $\sX$ of $X$, smooth and quasi-projective over $R[1/N]$, such that $P$ extends to a point $\sP\in\sX(R[1/N])$. We may also assume that $X \to \mathbb{A}^2_K$ extends to an \'etale morphism $\sX \to \mathbb{A}^2_{R[1/N]}$, and that $D$ extends to a vector field 
$\sD \in H^0(\sX,T_{\sX/\textup{Spec}\,R[1/N]})$ with isolated zeroes. Let $\mathfrak{p}$ be a maximal ideal of $R$, and let $|\cdot|_\mathfrak{p}$ be the $\mathfrak{p}$-adic absolute value, normalized by the condition $|\varpi_\mathfrak{p}|_\mathfrak{p}=\frac{1}{\sharp (R/\mathfrak{p})}$ for any 
uniformizing element $\varpi_\mathfrak{p}$ at $\mathfrak{p}$. Let 
$K_\mathfrak{p}$ and $R_\mathfrak{p}$ be the $\mathfrak{p}$-adic completions of $K$ and $R$, and 
$k_\mathfrak{p}:=R_\mathfrak{p}/(\varpi_\mathfrak{p})$ the residue field at $\mathfrak{p}$. Let finally $p$ denote the characteristic of $k_\mathfrak{p}$.

Suppose that $\mathfrak{p}\nmid N$, and that $K_\mathfrak{p}$ is absolutely unramified. Let $\wh{\sD}_\mathfrak{p}$ denote the $K_\mathfrak{p}$-derivation of $\wh{\sO}_{\sX_{K_\mathfrak{p}},\sP_{K_\mathfrak{p}}}\cong K_\mathfrak{p}[[X_1,X_2]]$ induced by
$\sD_{K_\mathfrak{p}}=D_{K_\mathfrak{p}}$. Then 
$$\wh{\sD}_\mathfrak{p}=X_1\partial_{X_1}+f_\mathfrak{p}(X_1,X_2)\partial_{X_2}$$
where $f_\mathfrak{p}\in R_\mathfrak{p}[[X_1,X_2]]$. Let also 
$\wb{\sD}_\mathfrak{p}$ be the $k_\mathfrak{p}$-derivation of $k_\mathfrak{p}[[X_1,X_2]]$ induced by $\wh{\sD}_\mathfrak{p}$. By assumption, $\wb{\sD}_\mathfrak{p}$ is p-closed. This immediately implies $$\wb{\sD}_\mathfrak{p}^{\, p}=\wb{\sD}_\mathfrak{p}$$
since $\wb{\sD}_\mathfrak{p}^{\, p}(X_1)=\wb{\sD}_\mathfrak{p}(X_1)=X_1$. By \cite[Lemma 6.4]{aramova_avramov} applied to $\wb{\sD}_\mathfrak{p}$, there exists a formal power series 
$\wb{Y}_2$ in $k_\mathfrak{p}[[X_1,X_2]]$ such that 
$\wb{\sD}_\mathfrak{p}(\wb{Y}_2)=\alpha \wb{Y}_2$.
Notice that $\wb{Y}_2$ may a priori depend on our choice of $\mathfrak{p}$. 
Let $Y_2 \in R_\mathfrak{p}[[X_1,X_2]]$ be any formal power series whose reduction modulo $\varpi_\mathfrak{p}$ is $\wb{Y}_2$. 
By construction, we have 
$\wh{\sD}_\mathfrak{p}(Y_2)=\lambda Y_2$ modulo $(\varpi_\mathfrak{p})$.
Set $Y_1:=X_1$. Then, we have 
$$\wh{\sD}_\mathfrak{p}=Y_1\partial_{Y_1}+(\lambda Y_2+g_\mathfrak{p}(Y_1,Y_2))\partial_{Y_2},$$
where $g_\mathfrak{p}\in\varpi_\mathfrak{p}R_\mathfrak{p}[[Y_1,Y_2]]$ 
vanishes to order at least $2$ at $0$. On the other hand, recall that $\wh{V}_{K_\mathfrak{p}}$ is defined by $Y_1=\phi(Y_2)$ in $\textup{Spf}\,\wh{\sO}_{\sX_{K_\mathfrak{p}},\sP_{K_\mathfrak{p}}}\cong \textup{Spf}\,K_\mathfrak{p}[[Y_1,Y_2]]$, where $\phi$ is the unique formal power series in $K_\mathfrak{p}[[T]]$ solution of the differential equation 
$$T\phi'(T)-\alpha \phi(T) = g_\mathfrak{p}(X,\phi(X))$$
such that $\phi(0)=\phi'(0)=0$. 
Let us write $\phi=\sum_{m\ge 2}a_m T^m$
and $g_\mathfrak{p}=\sum_{m\ge 0}b_m Y_2^m$, where $b_m \in \varpi_\mathfrak{p}R_\mathfrak{p}[[Y_1]]$. Then $||c_b||_{\mathfrak{p},1} \le |\omega_\mathfrak{p}|=\frac{1}{p^{[K\mathfrak{p}:\mathbb{Q}_p]}}$, so that Proposition \ref{prop:size_solution_ode} applies to show that
the size $S_{\sX_{R_\mathfrak{p}}}(\wh{V}_{K_\mathfrak{p}})$ satisfies
$$\log \frac{1}{S_{\sX_{R_\mathfrak{p}}}(\wh{V}_{K_\mathfrak{p}})} \le C(\alpha) [K_\mathfrak{p}:\mathbb{Q}_p]\frac{(\log p)^2}{p^2},$$
where $C(\alpha)>0$ depends only on $\alpha$.

\medskip

\noindent\textit{Step 3.} In the general setting, let $Y$ be the blow-up of $X$ at $P$, and let $M$ be the foliation on $Y$ induced by $L$. The exceptional divisor $E$ is $M$-invariant and contains exactly two singularities $Q_1$ and $Q_2$ (defined over $K$) of $M$, both reduced and non-degenerate. In addition, relabeling $Q_1$ and $Q_2$ if necessary, we may assume that there is a smooth formal subscheme $\wh{V}_1$ (resp. $\wh{W}_1$) of $\wh{Y}_{Q_1}$ defined over $K$ (resp. $\wh{Y}_{Q_2}$) which is $\wh{M}$-invariant and such that the morphism $Y \to X$ induces an isomorphism
$\wh{V_1}\cong \wh{V}$ (resp. $\wh{W_1}\cong \wh{W}$). The blow-up $\sY$ of $\sX$ along $\sP$ is a smooth and quasi-projective model of $Y$ over $R[1/N]$. In addition, $Q_1$ and $Q_2$ extends to points $\sQ_1\in \sY(R[1/N])$ and $\sQ_2\in \sY(R[1/N])$, and $M$ extends to a foliation $\sM \subset T_{\sY /\textup{Spec}\, R[1/N]}$. We have a commutative diagram
\begin{center}
\begin{tikzcd}[row sep=large]
\sY \ar[r]\ar[d] & \textup{Bl}_0 \mathbb{A}^2_{R[1/N]}\ar[d]\\
\sX \ar[r] & \mathbb{A}^2_{R[1/N]},
\end{tikzcd}
\end{center}
where the horizontal arrows are \'etale morphims. Let $U_1 \cong \mathbb{A}^2_{R[1/N]} \subset \textup{Bl}_0 \mathbb{A}^2_{R[1/N]}$ be the affine charts containing $Q_1$ with coordinates $(y_1,y_2)$ (centered at $Q_1$). Then the natural morphism
$U_1 \to \mathbb{A}^2_{R[1/N]}$ maps $(y_1,y_2)$ to $(y_1y_2,y_2)$. This immediately implies that the formal subscheme $\wh{V}_1$ is defined by $Y_1=\phi(Y_2)/Y_2$
in $\wh{Y}_{Q_1}=\textup{Spf}\,\wh{\sO}_{Y, Q_1}\cong \textup{Spf}\, K[[Y_1,Y_2]]$.
Finally, recall that the size (of a smooth formal scheme) is invariant by \'etale localization. Let again $\mathfrak{p}$ be a maximal ideal of $R$, and suppose that $\mathfrak{p}\nmid N$, and that $K_\mathfrak{p}$ is absolutely unramified. Then
Lemma \ref{lemma:size_graph_2} and Step 2 imply that 
$S_{\sX_{R_\mathfrak{p}}}(\wh{V}_{K_\mathfrak{p}})$ satisfies
$$\log \frac{1}{S_{\sX_{R_\mathfrak{p}}}(\wh{V}_{K_\mathfrak{p}})} \le
\log \frac{1}{S_{\sY_{R_\mathfrak{p}}}(\wh{W}_{K_\mathfrak{p}})}
\le C'(\alpha) [K_\mathfrak{p}:\mathbb{Q}_p]\frac{(\log p)^2}{p^2}$$
where $C'(\alpha)>0$ depends only on $\alpha$. 
This immediately implies that $\wh{V}$ is $A$-analytic, completing the proof of the theorem.
\end{proof}

Let $L$ be a foliation on a smooth complex quasi-projective surface $X$, and let $P$ be a singular point of $L$. By \cite[Theorem]{camacho_sad}, there exists a (possibly singular) analytic curve passing through $P$. The following is an easy consequence of Theorem \ref{thm:A-analyticity} above.

\begin{cor}\label{cor:cor}
Let $X$ be a smooth quasi-projective surface over a number field $K$, and let $L$ be a foliation on $X$. Suppose that $L$ is closed under $p$-th powers for almost all primes $p$.
Let $P\in X(K)$ be a singular point of $L$, and let $\wh{V}$ be an $\wh{L}$-invariant smooth formal subcheme of the completion $\wh{X}_P$ of $X$ at $P$ defined over $K$, where $\wh{L}$ denotes the $($formal$)$ foliation induced by $L$ on $\wh{X}_P$. Then $\wh{V}$ is $A$-analytic.
\end{cor}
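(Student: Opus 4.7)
The plan is to reduce to Theorem~\ref{thm:A-analyticity} via Seidenberg's theorem on resolution of singularities of foliations on surfaces. After a finite sequence of blow-ups $\pi \colon Y \to X$ at infinitely near $K$-rational points of $P$, the pulled-back foliation $M$ on $Y$ has only reduced singularities along $\pi^{-1}(P)$. The $p$-closedness is inherited by $M$, since the pullback of an $\sL$ that is $p$-closed on $\sX_\mathfrak{p}$ is $p$-closed on $\sY_\mathfrak{p}$ (the condition can be checked on the open locus where $\pi$ is an isomorphism). Moreover, since $\wh{V}$ is smooth and defined over $K$, its tangent direction at $P$ is a $K$-rational point of the first exceptional divisor; iterating the strict transform through each blow-up yields a smooth $K$-rational formal curve $\wh{V}'$ in $\wh{Y}_Q$, invariant under $\wh{M}$, for some point $Q \in Y(K)$ lying in $\pi^{-1}(P)$.

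At the point $Q$, the foliation $M$ is either regular or has a reduced singularity. In the regular case, $\wh{V}'$ is the formal leaf of $M$ through $Q$, which is $A$-analytic by \cite[Proposition~3.9]{bost}. In the singular case, Theorem~\ref{thm:A-analyticity}(1) forces the singularity of $M$ at $Q$ to be non-degenerate (this is exactly where the $p$-closedness hypothesis plays its role), and part~(3) of the same theorem ensures that both formal separatrices at $Q$ are $A$-analytic; the $\wh{M}$-invariant smooth formal curve $\wh{V}'$ must coincide with one of them, and is therefore $A$-analytic.

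It then remains to descend $A$-analyticity from $\wh{V}'$ back to $\wh{V}$ along the blow-up sequence. At each individual blow-up, working in suitable étale local coordinates centered on the point blown up and on the corresponding point of the exceptional divisor, and using the étale invariance of sizes, Lemma~\ref{lemma:size_graph_2} yields an inequality of the shape
\[
S_{\sX_{R_\mathfrak{p}}}(\wh{V}_{K_\mathfrak{p}}) \ge S_{\sY_{R_\mathfrak{p}}}(\wh{V}'_{K_\mathfrak{p}})
\]
for every prime $\mathfrak{p}$ not dividing a sufficiently divisible integer $N$. Iterating through all the blow-ups and summing over $\mathfrak{p}$ transfers the convergence of the $A$-analyticity sum from $\wh{V}'$ to $\wh{V}$, while the archimedean analyticity of $\wh{V}_\mathbb{C}$ at every embedding $K \subset \mathbb{C}$ follows from that of $\wh{V}'_\mathbb{C}$ via the proper birational morphism $\pi$. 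The one place that requires care is the coordinate bookkeeping: one must check at each blow-up that $\wh{V}$ is, up to an étale change of chart, presented as the graph of the power series $\phi$ to which Lemma~\ref{lemma:size_graph_2} applies, with $\wh{V}'$ corresponding to the graph of $\phi/X$. Once this routine verification is in place, the corollary is a direct chaining of Seidenberg's reduction with Theorem~\ref{thm:A-analyticity}.
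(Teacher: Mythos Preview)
Your approach matches the paper's: reduce to Theorem~\ref{thm:A-analyticity} via Seidenberg's resolution, then propagate $A$-analyticity back through the blow-ups using Lemma~\ref{lemma:size_graph_2}. The paper's proof differs in one point of implementation: it passes to a finite extension $F/K$ over which the full Seidenberg resolution is defined (the blow-up centers prescribed by Seidenberg need not be $K$-rational in general), applies Theorem~\ref{thm:A-analyticity} over $F$, and then invokes \cite[Proposition~3.4]{bost_acl} to descend $A$-analyticity from $F$ back to $K$. Your variant avoids the field extension by blowing up only at the $K$-rational infinitely near points singled out by the strict transform of $\wh{V}$ itself; this is legitimate and arguably cleaner, but your sentence ``$M$ has only reduced singularities along $\pi^{-1}(P)$'' then overstates both what you obtain and what you need---only the singularity at the single point $Q$ is controlled, and that is all the rest of the argument uses. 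Your explicit handling of the case where $M$ is regular at $Q$ (via \cite[Proposition~3.9]{bost}) is a detail the paper's terse proof leaves implicit.
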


\begin{proof}
Let $\wb{K}$ be an algebraic closure of $K$. By a result proved by Seidenberg (\cite{seidenberg_reduction}), there exists a composition of a finite number of blow-ups $Y_{\wb{K}} \to X_{\wb{K}}$ of $\wb{K}$-rational points such the foliation $M_{\wb{K}}$ induced by $L$ on $Y_{\wb{K}}$ has reduced singularities. The variety $Y_{\wb{K}}$ is defined over a finite extension $F$ of $K$. Let $M$ be the foliation on $Y$ indiced by $L$.
Notice that $M$ is closed under $p$-th powers for almost all primes $p$. Then Theorem \ref{thm:A-analyticity} applied to the proper transform of $\wh{V}_F$ in $Y$ together with \cite[Proposition 3.4]{bost_acl} and Lemma \ref{lemma:size_graph_2} imply that $\wh{V}$ is A-analytic.
\end{proof}

\providecommand{\bysame}{\leavevmode\hbox to3em{\hrulefill}\thinspace}
\providecommand{\MR}{\relax\ifhmode\unskip\space\fi MR }
\providecommand{\MRhref}[2]{%
  \href{http://www.ams.org/mathscinet-getitem?mr=#1}{#2}
}
\providecommand{\href}[2]{#2}


\begin{thebibliography}{ESBT99}

\bibitem[AA86]{aramova_avramov}
Annetta~G. Aramova and Luchezar~L. Avramov, \emph{Singularities of quotients by
  vector fields in characteristic {$p$}}, Math. Ann. \textbf{273} (1986),
  no.~4, 629--645.

\bibitem[And04]{andre}
Y.~Andr\'e, \emph{Sur la conjecture des {$p$}-courbures de
  {G}rothendieck-{K}atz et un probl\`eme de {D}work}, Geometric aspects of
  {D}work theory. {V}ol. {I}, {II}, Walter de Gruyter, Berlin, 2004,
  pp.~55--112.

\bibitem[BCL09]{bost_acl}
Jean-Beno\^{i}t Bost and Antoine Chambert-Loir, \emph{Analytic curves in
  algebraic varieties over number fields}, Algebra, arithmetic, and geometry:
  in honor of {Y}u. {I}. {M}anin. {V}ol. {I}, Progr. Math., vol. 269,
  Birkh\"{a}user Boston, Inc., Boston, MA, 2009, pp.~69--124. \MR{2641171}

\bibitem[Bos01]{bost}
Jean-Beno{\^{\i}}t Bost, \emph{Algebraic leaves of algebraic foliations over
  number fields}, Publ. Math. Inst. Hautes \'Etudes Sci. (2001), no.~93,
  161--221.

\bibitem[CS82]{camacho_sad}
C\'esar Camacho and Paulo Sad, \emph{Invariant varieties through singularities
  of holomorphic vector fields}, Ann. of Math. (2) \textbf{115} (1982), no.~3,
  579--595.

\bibitem[Dru21]{cd1fzerocan}
St\'{e}phane Druel, \emph{Codimension 1 foliations with numerically trivial
  canonical class on singular spaces}, Duke Math. J. \textbf{170} (2021),
  no.~1, 95--203.

\bibitem[ESBT99]{esbt}
T.~Ekedahl, N.I. Shepherd-Barron, and R.~Taylor, \emph{A conjecture on the
  existence of compact leaves of algebraic foliations}, preprint available at
  https://www.dpmms.cam.ac.uk/$\sim$ nisb/, 1999.

\bibitem[McQ08]{mcquillan08}
Michael McQuillan, \emph{Canonical models of foliations}, Pure Appl. Math. Q.
  \textbf{4} (2008), no.~3, part 2, 877--1012.

\bibitem[MM80]{mattei_moussu}
J.-F. Mattei and R.~Moussu, \emph{Holonomie et int\'{e}grales premi\`eres},
  Ann. Sci. \'{E}cole Norm. Sup. (4) \textbf{13} (1980), no.~4, 469--523.

\bibitem[Rob00]{robert}
Alain~M. Robert, \emph{A course in {$p$}-adic analysis}, Graduate Texts in
  Mathematics, vol. 198, Springer-Verlag, New York, 2000. \MR{1760253}

\bibitem[Sei68]{seidenberg_reduction}
A.~Seidenberg, \emph{Reduction of singularities of the differential equation
  {$A\,dy=B\,dx$}}, Amer. J. Math. \textbf{90} (1968), 248--269.

\bibitem[SS81]{sibuya_sperber_1}
Yasutaka Sibuya and Steven Sperber, \emph{Convergence of power series solutions
  of {$p$}-adic nonlinear differential equation}, Recent advances in
  differential equations ({T}rieste, 1978), Academic Press, New York-London,
  1981, pp.~405--419. \MR{643150}

\end{thebibliography}

\end{document}